\newcommand{\arxiv}[1]{\href{http://arxiv.org/abs/#1}{{\tt arXiv:#1}}}
\newcommand{\ncom}{\newcommand}
\ncom{\bq}{\begin{equation}}
\ncom{\eq}{\end{equation}}
\ncom{\beqn}{\begin{eqnarray*}}
\ncom{\eeqn}{\end{eqnarray*}}
\ncom{\beq}{\begin{eqnarray}}
\ncom{\eeq}{\end{eqnarray}}
\ncom{\been}{\begin{enumerate}}
\ncom{\eeen}{\end{enumerate}}
\ncom{\olin}{\overline}
\ncom{\f}{\frac}
\ncom{\rar}{\rightarrow}
\def\nno{\nonumber}
\def\theequation{\thesection.\@arabic \c@equation}
\def\@citecolor{blue}
\def\@linkcolor{blue}
\def\@urlcolor{blue}
\def\@citecolor{blue}
\def\@linkcolor{blue}
\def\@urlcolor{blue}
\def\theequation{\arabic{equation}}
\def\theequation{\thesection.\arabic{equation}}
\numberwithin{equation}{section}
\def\charac{\operatorname{char}}
\def\dim{\operatorname{dim}}
\def\ker{\operatorname{ker}}
\def\supp{\operatorname{Supp}}
\newcommand{\kk}{\Bbbk}
\def\lrar{{\longrightarrow}}
\def\Z{\mathbb Z}
\def\A{\mathbb A}
\def\F{\mathcal F}
\def\R{\mathcal R}
\def\I{\mathcal I}
\def\J{\mathcal J}
\newcommand{\p}{\mathfrak p}
\newcommand{\q}{\mathfrak q}
\newcommand{\m}{\mathfrak m}
\theoremstyle{plain}
\newtheorem{theorem}[equation]{Theorem}
\newtheorem{proposition}[equation]{Proposition}
\newtheorem{lemma}[equation]{Lemma}
\theoremstyle{definition}
\newtheorem{notation}[equation]{Notation}
\newtheorem{question}[equation]{Question}
\ncom{\bib}{\bibitem}
\ncom{\limns}{\underset{\underset{s}{\longrightarrow}}{\lim}}
\ncom{\limnr}{\underset{\underset{r}{\longrightarrow}}{\lim}}
\ncom{\Tprime}{T^{\prime}}
\ncom{\mprime}{\m^{\prime}}
\def\ff{{\bf f}}
\begin{document}
 \title[ ] { Symbolic Blowup algebras of  monomial curves in $\A^3$ defined by arithmetic sequence }
 \author{Clare D'Cruz}
 \address{Chennai Mathematical Institute, Plot H1 SIPCOT IT Park, Siruseri, 
Kelambakkam 603103, Tamil Nadu, 
India
} 
\email{clare@cmi.ac.in} 
\keywords{Symbolic  Rees algebra, Cohen-Macaulay, Gorenstein}
 \thanks{The author was partially funded by a grant from Infosys Foundation}
 \subjclass[2010]{Primary: 13A30, 1305, 13H15, 13P10} 

\begin{abstract}
In this paper,  we consider monomial curves in  $\A_k^3$ parameterized by $t \rar (t^{2q +1}, t^{2q +1 + m}, t^{2q +1 +2 m})$ where $gcd( 2q+1,m)=1$. The symbolic
blowup algebras   of these monomial curves is Gorenstein (\ \cite{goto-nis-shim}, \cite{goto-nis-shim-2}). From the results in  \cite{schenzel},  it follows that the defining ideal  of 
$R_{s}(\p)$ is given by the Pfaffians of a skew-symmetric matrix. In this paper we give a proof which mainly involves the use of 
Gr$\ddot{o}$ebner basis. We  also able  describe all the symbolic powers $\p^{(n)}$ for all $n \geq 1$. 
\end{abstract}

\maketitle

\section{introduction}
Let $\kk$ be a field and let $\A^n_{\kk}$ (or $\A^n$) be the affine  $n$-space over $\kk$. 
One
well known question is: Is every affine irreducible curve $Y$ in $\A^n_{\kk}$ a set theoretic complete intersection of $(n - 1)$
hypersurfaces?
 In other words, if  $T = k[x_1, \ldots , x_n]$ and  $I(Y ) =\{f \in  T | f (a) = 0 \mbox{ for all } a \in Y \}$, 
 then does there exist $n - 1$ elements $f_1, \ldots , f_{n-1} \in T$ such that 
 $I(Y) = \sqrt{(f_1, \ldots , f_{n-1}})$?
The answer to this question is quite difficult and depends  upon the characteristic of the field $\kk$.
The oldest known result in this direction is a result of L.~Kronecker where he showed that $I(Y)$ can be generated set theoretically by $(n+1)$-equations \cite{kronecker}. 

Later, several researchers showed that there exist  interesting examples  of algebraic subsets $Y \subseteq 
\A^n$  which can be set theoretically defined by $(n-1)$-equations. One remarkable result  which appeared in  
1978 was by  R.~Cowsik and M.~Nori.  They showed that if $\kk$ is  a perfect field such that $\charac(\kk) = p$ 
and if $I$ is a radical ideal  of codimension one, then   $I$ is a set theoretic complete intersection \cite[Theorem 
1]{cowsik-nori}. Later in 1979, for $\charac(\kk)=0$, H.~Bresinsky  showed that  all monomial curves in $\A^3_{\kk}$ are set theoretic complete intersection \cite{bresinsky}. In \cite{bresinsky2} the author extended his ideas to monomial curves in $\A^4$. 
In \cite{scolan} the author extends the ideas in \cite{schenzel2} to study symbolic powers of monomial curves in $\A^4$. 
In 1990, D.~Patil gave   more general class of  monomial curves in $\A^n$ which are set theoretic complete intersection \cite[Theorem~1.1]{patil}. 
 
In $1981$, R.~Cowsik gave a new direction to this problem. He showed that if
$(R, \m)$ a regular local ring and $\p$ is a prime ideal such that $\dim(R/ \p)=1$, then  Noetherianness of  the symbolic Rees Algebra
$R_{s}(\p) := \oplus_{n \geq 0} \p^{(n)}$ implies that  $\p$ is a set theoretic complete intersection 
\cite{cowsik}.  However, the converse need not be true \cite{goto-nis-wat}. Motivated by 
Cowsik's result, 
 in 1987,  Huneke gave necessary and sufficient conditions for 
$\R_{s}(\p)$ to be Noetherian when $\dim~ R=3$ \cite{huneke}. Huneke's result  was 
generalised in $1991$ for $\dim~ R \geq 3$ by M.~Morales  \cite{morales}. All these 
results paved a new  way to  study the famous problem of set theoretic complete 
intersection. In the last twenty-five years several researchers have worked on the  symbolic Rees Algebra $\R_{s}(\p)$. However, there are still many unanswered questions related both the problems, i.e., set theoretic complete intersection and Noetherianness of symbolic Rees algebra.  A few problems will be listed at the end of this paper.  A good survey article with some open questions on set theoretic complete intersection is by G.~Lyubeznik \cite{lyubeznik}.

In this paper, we consider the monomial curve in $\A^3$ parameterized by $(t^{n_1}, t^{n_2}, t^{n_3})$ where  $n_i = 2q + 1 + (i-1)m$ and $gcd(n_1, m) = 1$.  Throughout this paper   $R= k[[x_1, x_2, x_3]]$,   $S = k[[t]]$  and  $\p= \ker(\phi)$, where   $\phi$ is the homomorphism  defined by $\phi(x_i) = t^{n_i}$ for $1 \leq i \leq 3$. We say that  $\p$ is the defining ideal of the  monomial curve parameterized by 
$(t^{n_1}, t^{n_2}, t^{n_3})$. It is well known that these curves are a set-theoretic complete intersection (\cite {bresinsky}, \cite{valla}, \cite{patil}).
The symbolic powers  of curves  are also of interest  (\cite{eliahou}, \cite{huneke0}, \cite{schenzel}, \cite{schenzel2}, \cite{vasconcelos2}). 
In this paper we describe all the symbolic powers  $\p^{(n)}$ (Theorem~\ref{symbolic power}).

 The symbolic Rees algebra of these monomial curves have been studied by several authors in the past. For example see  \cite{valla},  \cite{herzog-ulrich}, \cite{goto-nis-shim},  \cite{schenzel} and 
 \cite{goto-nis-shim-2}. 
The Cohen-Macaulay and Gorenstein property of these monomial curves has also been studied
 \cite{goto-nis-shim}  and \cite{goto-nis-shim-2}. In \cite{schenzel}, the authors  are able to explicitly describe when $R_{s}(\p)$ is Cohen-Macaulay and  when it is Gorenstein. 

We give a different and simple proof for  the Cohen-Macaulayness and Gorensteinness of the blowup algebras $R_s(\p)$ and $G_s(\p):= \oplus_{n \geq 0} \p^{(n)} / \p^{(n+1)}$. One crucial point here is that  though  the ideals we are interested in  are binomial ideals in $\kk[[ x_1, x_2, x_3]]$, we  can to get our main result, by  dealing  with the corresponding  monomial ideals in  the polynomial ring $\kk[x_2, x_3]$. These ideals are defined in Section~\ref{prelim}. Powers and quotients  of these  monomial ideals  are much easier to compute and do not depend on any sophisticated results.   Hence,  any student with a good knowledge of basic Commutative Algebra can follow the proof. Our proofs may give a different approach and help to answer some questions related to symbolic powers. All basic results  used in this paper can be found in \cite{matsumura}. 

We now describe the organisation of  this paper. In Section~\ref{prelim}, we state some basic results. We also state a result which gives a way to compute lengths of modules in the polynomial ring $k[x_2, x_3]$. In Section~\ref{cm-filtration}, we  prove the Cohen-Macaulayness of the filtration $\F = \{ I_n\}_{n \geq 0}$ where the ideals $I_n \subseteq \kk[x_2, x_3]$ are defined in Section~\ref{prelim}. 
In Section~\ref{inductive}, we compute length of $I_{n-1}/ (I_n:x_3^q)$ which is useful in the computations in the  next section. 
In Section~\ref{crucial},  we compute lengths various modules over $\kk[[ x_1, x_2, x_3]]$ which are needed to prove the Cohen-Macaulay and Gorenstein property of the blowup algebras $R_s(\p)$ and $G_s(\p)$.  In Section~\ref{the main section}, we prove our main results. In Section~\ref{questions} we suggest list a few questions which might be of interest to the reader.

\section{Preliminaries}
\label{prelim}
It is well known that  the generators for $\p$  are the $2 \times 2$ minors of the matrix 
$
{\displaystyle
\begin{pmatrix}
   x_1   &  x_2  &  x_3^q\\
     x_2   &  x_3 &  x_1^{m+q}
\end{pmatrix}
}$ \cite{herzog}. 
In particular, if  $ g_1 = x_1^{m+q}x_2 - x_3^{q+1}$, 
  $g_2 =  x_1^{m+q+1} - x_2 x_3^q$ and 
 $g_3 =   x_2^2 - x_1 x_3$,  then 
  $\p = (g_1, g_2, g_3)$.
  
  The following result is well known (\cite[Corollary~4.4]{goto-nis-shim-2}). We state it for the sake of completion. 
\begin{lemma}
\label{noetherian}
  $R_{s}(\p)$ is a Noetherian ring. 
\end{lemma}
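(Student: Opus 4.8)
The plan is to prove that $R_{s}(\p)$ is Noetherian by exhibiting it as a finitely generated $\kk$-algebra: since each $\p^{(n)}$ sits inside the Noetherian ring $R=\kk[[x_1,x_2,x_3]]$, finite generation of the graded algebra $\bigoplus_{n\ge 0}\p^{(n)}t^{n}$ over $R$ is equivalent to its being Noetherian, by the Hilbert basis theorem. Note first that the set-theoretic complete intersection property recorded above does \emph{not} by itself suffice, since Cowsik's implication does not reverse \cite{goto-nis-wat}; one needs direct control of the symbolic powers. Because $\dim R=3$ and $\height \p=2$, the natural engine is Huneke's criterion \cite{huneke}, which for a dimension-one prime in a three-dimensional regular local ring reduces Noetherianness of $R_{s}(\p)$ to the verification of a single numerical (colength) condition, rather than to the control of infinitely many $\p^{(n)}$ simultaneously.

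Concretely, I would first exploit the determinantal presentation. The generators $g_1 = x_1^{m+q}x_2 - x_3^{q+1}$, $g_2 = x_1^{m+q+1} - x_2 x_3^q$, $g_3 = x_2^2 - x_1 x_3$ are the $2\times2$ minors of the given $2\times3$ matrix, and from the set-theoretic complete intersection one extracts a two-generated ideal $\af\subseteq\p$ with $\sqrt{\af}=\p$. One then searches for a single "new" element $f\in\p^{(s)}$ lying in a symbolic power but not in the corresponding ordinary power, and the target statement is that $\p$ together with $f$ already generates everything, i.e. $R_{s}(\p)=R[g_1 t,\,g_2 t,\,g_3 t,\,f t^{s}]$. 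Since no $x_i$ lies in $\p$ (each maps to $t^{n_i}\ne 0$), the symbolic powers are recovered as colons by a power of a variable, $\p^{(n)}=\p^{n}:x_1^{\infty}$, so that $\p^{(n)}$ is the $\p$-primary component of $\p^{n}$ and is amenable to explicit Gr\"obner/monomial computation.

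The heart of the argument is then an induction showing a stabilisation of the form $\p^{(n+1)}=\p\,\p^{(n)}+f\,\p^{(n+1-s)}$ for $n\gg0$, which says precisely that no genuinely new generator appears in high symbolic powers. I would establish this by computing the successive quotients $\p^{(n+1)}\big/\bigl(\p\,\p^{(n)}+f\,\p^{(n+1-s)}\bigr)$ through the colon description of $\p^{(n)}$, reducing each step to a length computation over $R$. The main obstacle is exactly this uniform control of $\p^{(n)}$ for all $n$ at once; Huneke's theorem \cite{huneke} is what makes it tractable, collapsing the infinite family of required inclusions into one equality of colengths for the chosen ideal $\af$, the element $f$, and the correct symbolic order $s$. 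Verifying that single identity is where the real work lies, and it is precisely the kind of length computation in $\kk[[x_1,x_2,x_3]]$ that the later sections—by passing to the monomial ideals $I_n\subseteq\kk[x_2,x_3]$—are set up to supply.
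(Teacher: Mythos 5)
You correctly identify the paper's engine: Huneke's criterion from \cite{huneke} is exactly what the paper applies, and your remark that the set-theoretic complete intersection property alone does not suffice is also right. But your proposal stops at the point where the proof actually begins, and this is a genuine gap. Huneke's criterion (for a height-two prime in a three-dimensional regular local ring) asks for two \emph{concrete} elements $f \in \p^{(k)}$, $g \in \p^{(l)}$ and a parameter $x \in \m \setminus \p$ satisfying the single length equality $\ell(R/(x,f,g)) = k\,l\cdot e(x; R/\p)$; it does not ask for a two-generated ideal $\af$ with $\sqrt{\af} = \p$, nor for the stabilisation $\p^{(n+1)} = \p\,\p^{(n)} + f\,\p^{(n+1-s)}$ that you propose to prove by induction --- avoiding precisely that kind of infinite induction is the whole point of the criterion, so that middle portion of your plan is a detour which your final paragraph then retracts. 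What remains after the retraction is only the admission that ``verifying that single identity is where the real work lies'': you have named the tool but not supplied its inputs.

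The missing content, which is the entire substance of the paper's short proof, is: (i) the explicit choice $f_1 = g_3 = x_2^2 - x_1x_3 \in \p$ and $f_2 = -x_1^{2(m+q)+1} - x_1^{m+q-1}x_2^3x_3^{q-1} + 3x_1^{m+q}x_2x_3^{q} - x_3^{2q+1}$; (ii) the verification that $f_2 \in \p^{(2)}$, which the paper obtains from the identity $x_3 \cdot f_2 = -g_1^2 + x_1^{m+q-1}g_2g_3 \in \p^2$ together with the fact that $x_3$ is a nonzerodivisor on $R/\p^{(2)}$ (as $\p^{(2)}$ is $\p$-primary and $x_3 \notin \p$); and (iii) the length computation $\ell\left( R/(x_1,f_1,f_2)\right) = \ell\left( R/(x_1,x_2^2,x_3^{2q+1})\right) = 2(2q+1) = 2\cdot e(x_1; R/\p)$, which holds because modulo $x_1$ the elements $f_1$ and $f_2$ reduce to $x_2^2$ and $-x_3^{2q+1}$. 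With $k=1$, $l=2$, $x = x_1$ this is exactly Huneke's hypothesis, and Noetherianness follows. None of these three steps appears in your proposal, and without them --- in particular without producing any element of $\p^{(2)} \setminus \p^2$ at all --- there is no proof.
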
  
\begin{proof}  
Let $f_1 := g_3 = x_2^2-x_2x_3 \in \p$ and 
$f_2 = - x_1^{2(m+q)+1}- x_1^{m+q-1} x_2^3 x_3^{q-1}+3 x_1^{m+q} x_2 x_3^{q}-x_3^{2q+1}$. Then 
\beqn
x_3 \cdot f_2 = - g_1^2 + x_1^{m+q-1} g_2g_3 \in \p^2 \subseteq \p^{(2)}.
 \eeqn
As $x_3^n$ is nonzerodivisor on $R/ \p^{(2)}$ for all $n$, $f_2 \in \p^{(2)}$.
Moreover, 
\beqn
\ell \left( \f{R}{(x_1, f_1, f_2)}\right)
= \ell \left( \f{R}{(x_1, x_2^2, x_3^{2q+1})}\right)
= 2( 2q+1) 
= 2 \cdot  e(x_1; R/ \p). 
\eeqn
By Huneke's criterion \cite[Theorem~3.1]{huneke},   $R_{s}(\p)$ is Noetherian.
\end{proof}

 Let $T=\kk[x_1, x_2, x_3]$.  The following lemma gives us a way to compute the length of an $R$-module in terms of the  length of the corresponding $T$-module.

  \begin{lemma}
  \label{comparing lengths}
\cite[Lemma~2.8]{clare-shreedevi} 
  Let
  $\m = (x_1, x_2, x_3)T$
   and $M$ a finitely generated  $T$-module such that  $\supp(M) = \{ \m\}$.  Then 
 \beqn
 \ell_R ( M \otimes_T R) = \ell_T(M).
 \eeqn
  \end{lemma}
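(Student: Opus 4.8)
The plan is to reduce the statement to an elementary length computation by combining a composition series with the flatness of $R$ over $T$. Since $M$ is a finitely generated $T$-module with $\supp(M) = \{\m\}$, it is supported only at the single maximal ideal $\m$, so $\dim M = 0$ and $M$ is both Noetherian and Artinian as a $T$-module; hence it has finite length, and I set $\ell := \ell_T(M)$. The natural map $T \hookrightarrow R$ factors as $T \to T_\m \to \widehat{T_\m} = R$, a localization followed by an $\m$-adic completion of a Noetherian local ring, so $R$ is flat over $T$. This flatness is the mechanism that will let me transport a composition series from $T$ to $R$ without loss of exactness.

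Next I would fix a composition series of $M$ as a $T$-module,
\[
0 = M_0 \subset M_1 \subset \cdots \subset M_{\ell} = M, \qquad M_i / M_{i-1} \cong T/\m = \kk,
\]
where the identification of every composition factor with $\kk$ is forced by the fact that $\m$ is the unique prime in $\supp(M)$. Applying the exact functor $- \otimes_T R$ to each short exact sequence $0 \to M_{i-1} \to M_i \to M_i/M_{i-1} \to 0$ produces short exact sequences of $R$-modules
\[
0 \to M_{i-1} \otimes_T R \to M_i \otimes_T R \to (M_i/M_{i-1}) \otimes_T R \to 0 .
\]
By additivity of length along short exact sequences, it then suffices to compute the $R$-length of each graded piece $(M_i/M_{i-1}) \otimes_T R$.

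The key computation is that
\[
(M_i/M_{i-1}) \otimes_T R \cong \kk \otimes_T R \cong R/\m R \cong \kk,
\]
since $\m R = (x_1,x_2,x_3)R$ is precisely the maximal ideal of the local ring $R$, whose residue field is again $\kk$. Thus each piece has $R$-length exactly one, and summing over $i = 1, \ldots, \ell$ gives $\ell_R(M \otimes_T R) = \ell = \ell_T(M)$. The only places requiring any care, and the closest thing to an obstacle, are the verification that $R$ is flat over $T$ and that the residue field of $T$ at $\m$ coincides with that of $R$ at its maximal ideal; both are standard facts about localization and completion, so once they are in hand the remaining argument is purely formal.
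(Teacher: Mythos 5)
Your proof is correct. The paper itself gives no argument for this lemma --- it simply cites \cite[Lemma~2.8]{clare-shreedevi} --- so your write-up supplies a proof where the paper only offers a pointer. Your route is the standard one and every step holds: $\supp(M)=\{\m\}$ with $M$ finitely generated forces finite length with all composition factors equal to $T/\m$ (any simple subquotient $T/\mathfrak{n}$ has support $\{\mathfrak{n}\}\subseteq\supp(M)$); the factorization $T\to T_\m\to\widehat{T_\m}=R$ gives flatness of $R=\kk[[x_1,x_2,x_3]]$ over $T$, which the paper also invokes elsewhere when asserting $\mathcal{I}_nR=\mathcal{I}_nT\otimes_T R$; and the identification $(T/\m)\otimes_T R\cong R/\m R\cong\kk$ converts the tensored composition series into a composition series of $M\otimes_T R$ of the same length. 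The one point worth stating explicitly, which you use implicitly, is that the tensored filtration is again a chain of submodules with simple quotients precisely because flatness preserves the injections $M_{i-1}\otimes_T R\to M_i\otimes_T R$; you do say this, so the argument is complete.
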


Let
${\mathcal J}_1 = \{ g_1, g_2,  g_3 \}$
 and 
${\mathcal J}_2 =  \{ f_2 \}.$
We define 
\beq
\label{equation of Jn} 
\J_1 T := (g_1, g_2, g_3)T, \hspace{.2in}
\J_2 T := (f_2) T, \hspace{.2in}
     {\I}_n T
:= \sum_{a_1 + 2 a_2  = n} 
      (\J_{1}^{a_1}T)(  \J_{2}^{a_{2}} T).
\eeq
As $R$ is a flat $T$-module,
  $ {\mathcal I}_nR = {\mathcal I}_n T \otimes_T  R$.

Let $T^{\prime} = k[x_2, x_3] \cong T/ x_1T$. For $i=1,2$ put  
 \beq
\label{definition of Ji}
J_1 :=  \J_1 T^{\prime} =  (x_2^2, x_2x_3^q, x_3^{q+1}),   \hspace{.2in}
J_2 :=  \J_2 T^{\prime}=  (x_3^{2q+1}), \hspace{.2in}
I_n :=     {\mathcal I}_n T^{\prime}
=  \sum_{a_1 + 2 a_2 = n} J_{1}^{a_1} J_{2}^{a_{2} }.
\eeq

\begin{proposition}
\label{description of In}
Let $n \geq 1$. Then
\been
\item
\label{description  of In one}
$    {\mathcal I}_n  R\subseteq   \p^{(n)}$.

\item
\label{description of In two}
$  ({\mathcal I}_n  + (x_1))T$  is an    homogeneous ideal.  

\item
\label{description of In three}
 $({\mathcal I}_n  + (x_1))T$ is an $\m$-primary ideal.
\eeen
\end{proposition}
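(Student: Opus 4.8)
The plan is to treat the three assertions separately, but in each case the guiding principle is the same: reduction modulo $x_1$ identifies the binomial data in $T$ with the purely monomial ideals $J_1,J_2,I_n$ of \eqref{definition of Ji} living in $T^{\prime}=\kk[x_2,x_3]$, and monomial ideals are trivial to analyze.

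For \eqref{description  of In one} I would lean on two facts already in hand. The generators $g_1,g_2,g_3$ of $\J_1$ lie in $\p=\p^{(1)}$, and the generator $f_2$ of $\J_2$ lies in $\p^{(2)}$ (this is precisely the content of the displayed computation in the proof of Lemma~\ref{noetherian}, where $x_3 f_2\in\p^2$ and $x_3$ is a nonzerodivisor mod $\p^{(2)}$). Combining these with the multiplicative property of symbolic powers, $\p^{(a)}\p^{(b)}\subseteq\p^{(a+b)}$ — which holds because $\p^aR_\p\cdot\p^bR_\p=\p^{a+b}R_\p$ and a product of elements of $R$ stays in $R$ — every summand defining $\I_n$ satisfies
\[
\J_1^{a_1}\J_2^{a_2}\subseteq(\p^{(1)})^{a_1}(\p^{(2)})^{a_2}\subseteq\p^{(a_1)}\p^{(2a_2)}\subseteq\p^{(a_1+2a_2)}=\p^{(n)}.
\]
Summing over $a_1+2a_2=n$ and using that $\p^{(n)}$ is an ideal of $R$, we get $\I_nR\subseteq\p^{(n)}$.

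For \eqref{description of In two} the key observation is that reducing the defining generators modulo $x_1$ annihilates every $x_1$-divisible monomial: $g_1\mapsto-x_3^{q+1}$, $g_2\mapsto-x_2x_3^q$, $g_3\mapsto x_2^2$, and $f_2\mapsto-x_3^{2q+1}$ (here one uses $m+q\geq2$ so that all terms of $f_2$ except $-x_3^{2q+1}$ carry a positive power of $x_1$). Hence the images of $\J_1,\J_2,\I_nT$ in $T^{\prime}$ are exactly $J_1,J_2,I_n$. I would then invoke the general fact that, since $T\to T^{\prime}=T/(x_1)$ is a graded map with homogeneous kernel $(x_1)$, an ideal $\af\supseteq(x_1)$ is homogeneous in $T$ if and only if its image $\af/(x_1)$ is homogeneous in $T^{\prime}$: decompose any $f\in\af$ into its graded pieces, push to $T^{\prime}$, and use homogeneity of the image to pull each piece back into $\af+(x_1)=\af$. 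As the image of $(\I_n+(x_1))T$ is the monomial, hence homogeneous, ideal $I_n$, assertion \eqref{description of In two} follows.

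For \eqref{description of In three} I would again use $T/(\I_n+(x_1))\cong T^{\prime}/I_n$, reducing the claim to $\dim T^{\prime}/I_n=0$, i.e. that $I_n$ is $(x_2,x_3)$-primary. The term with $a_1=n,\ a_2=0$ contributes $J_1^n$ to the sum defining $I_n$, so $J_1^n\subseteq I_n$; and since $J_1=(x_2^2,x_2x_3^q,x_3^{q+1})$ already contains $x_2^2$ and $x_3^{q+1}$, its $n$-th power contains $x_2^{2n}$ and $x_3^{n(q+1)}$. Thus $I_n$ contains a power of each variable, $T^{\prime}/I_n$ has finite length, and lifting back, $\af=(\I_n+(x_1))T$ contains $x_1,x_2^{2n},x_3^{n(q+1)}$, so $\sqrt{\af}=\m$ and $\af$ is $\m$-primary. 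I do not expect a serious obstacle in any of the three parts; the only point demanding care is the verification of the reductions modulo $x_1$ in \eqref{description of In two}, which rests on the standing inequality $m+q\geq2$ ensuring that $-x_3^{2q+1}$ is the unique surviving term of $f_2$.
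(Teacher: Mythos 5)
Your proposal is correct and follows essentially the same route as the paper's proof: part (1) via $\J_1\subseteq\p$, $f_2\in\p^{(2)}$ and multiplicativity of symbolic powers; part (2) by observing that modulo $x_1$ the generators $g_1,g_2,g_3,f_2$ reduce to the monomial generators of $J_1,J_2$, so $(\I_n+(x_1))T$ is (up to the harmless phrasing of working in $T'$ versus in $T$) a monomial, hence homogeneous, ideal; part (3) via $\J_1^n\subseteq\I_n$ and the powers of the variables it contains. The only difference is expository: you spell out the reduction mod $x_1$ and the lifting of homogeneity along $T\to T'$, which the paper leaves implicit.
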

\begin{proof}
(\ref{description of In one})
As   ${\J}_1 \p$ and $J_2 = (f_2)    \subseteq \p^{(2)}$,   for all $a_1,  a_{2} \in \Z_{\geq 0}$, 
\beq
\label{containment of J}
 \J_{1}^{a_1} \J_{2}^{a_{2} }
 \subseteq \p^{a_1} (\p^{(2)})^{a_2} 
 \subseteq \p^{(a_1 + 2a_2)  }.
\eeq
Summing over all $a_1+ 2 a_2 =n$ and applying (\ref{containment of J})  to (\ref{equation of Jn}) we get (\ref{description of In one}).

(\ref{description of In two})  
As  $({\mathcal J}_1 + (x_1))T = (J_1 , ( x_1))T$ and 
  $({\mathcal J}_2 + (x_1))T = (J_2 , x_1)T$  are homogenous ideals and 
 $ ({\mathcal I}_n + (x_1))T =   (\sum_{a_1 + 2 a_2 = n} J_{1}^{a_1} J_{2}^{a_{2} } , x_1)T$ we get 
 (\ref{description  of In two}). 

(\ref{description  of In three}) By \eqref{equation of Jn}, ${\mathcal J}_1^{n}T \subseteq {\mathcal I}_nT$ and  $({\mathcal J}_1^{n} + (x_1))T = ((x_2^2, x_2x_3^q, x_3^{q+1})^{n} ,x_1)T$ which implies that 
$\m T =( \sqrt{{\mathcal J}_1^{n} + (x_1)} )T \subseteq (\sqrt{{\mathcal I}_n + (x_1)})T \subseteq \m T$. 
\end{proof}

We state a result on monomial ideals which follows from \cite[Proposition~1.14]{ene-herzog} and will be consistently used in all the proofs which involve monomial ideals. 
\begin{proposition}
\label{prop-herzog}
Let $I = (u_1, \ldots, u_r)$ and $J = (v)$ be monomial ideals  in a polynomial ring over a field $\kk$. 
Then $I : J =  ( \{u_i/gcd(u_i,v) : i = 1,...,r\})$.
\end{proposition}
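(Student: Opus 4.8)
The plan is to prove the two inclusions separately, reducing everything to one elementary observation about exponent vectors: divisibility of $wv$ by $u_i$ is the same as divisibility of $w$ by $u_i/\gcd(u_i,v)$. Throughout I write $w_i := u_i/\gcd(u_i,v)$, and I use that since $J=(v)$ is principal, $I:J = \{\, f : fv \in I \,\}$. Note also that each $w_i$ is a genuine monomial because $\gcd(u_i,v)$ divides $u_i$.

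First I would dispose of the easy inclusion $(\{w_1,\ldots,w_r\}) \subseteq I:J$. For each $i$ one has $w_i v = u_i\cdot\bigl(v/\gcd(u_i,v)\bigr)$, which is a monomial multiple of $u_i$ since $\gcd(u_i,v)$ divides $v$; hence $w_i v \in (u_i) \subseteq I$, so $w_i \in I:J$ and the inclusion follows.

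For the reverse inclusion I would invoke two standard features of monomial ideals in a polynomial ring over a field: a polynomial lies in a monomial ideal if and only if each of its monomial terms does, and a monomial lies in a monomial ideal if and only if it is divisible by one of the monomial generators. Let $f \in I:J$, so $fv \in I$; writing $f$ as a $\kk$-linear combination of distinct monomials, each term $x^{\alpha}v$ of $fv$ must lie in $I$, and therefore some generator $u_i$ divides $x^{\alpha}v$. Here I apply the key divisibility equivalence
\[
u_i \mid x^{\alpha}v \quad\Longleftrightarrow\quad w_i \mid x^{\alpha},
\]
whose verification is pure bookkeeping: writing $u_i = x^{a}$ and $v = x^{b}$ in multi-index notation gives $\gcd(u_i,v) = x^{\min(a,b)}$ and $w_i = x^{\max(a-b,\,0)}$ (all operations componentwise), and since exponent vectors are nonnegative the inequality $a \le \alpha+b$ is equivalent to $\max(a-b,0) \le \alpha$. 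Consequently each monomial term $x^{\alpha}$ of $f$ is divisible by some $w_i$, so $f \in (\{w_1,\ldots,w_r\})$, which completes the reverse inclusion and hence the proposition.

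I do not expect a genuine obstacle here: the entire content is the exponent-vector equivalence, and the only point demanding a little care is the truncation $\max(a-b,0)$, which correctly handles the components where $v$ is more divisible than $u_i$ (there $w_i$ carries exponent $0$) — precisely the components whose extra powers are supplied by the factor $v$ in $x^{\alpha}v$.
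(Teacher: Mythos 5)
Your proof is correct. There is, however, no internal argument in the paper to compare it against: the paper states the proposition and derives it purely by citation to Proposition~1.14 of Ene--Herzog's book on Gr\"obner bases, which is exactly this colon-ideal formula. Your proposal supplies the elementary proof that lies behind that citation: the easy inclusion via $w_i v = u_i\cdot\bigl(v/\gcd(u_i,v)\bigr)$, and the reverse inclusion by combining the two structural facts about monomial ideals (a polynomial lies in one iff each of its terms does, and a monomial lies in one iff some generator divides it) with the componentwise identity $a-\min(a,b)=\max(a-b,0)$, which is precisely the equivalence $u_i \mid x^{\alpha}v \Leftrightarrow w_i \mid x^{\alpha}$. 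One implicit point worth a sentence in a written-up version: when you say each term $x^{\alpha}v$ of $fv$ lies in $I$, you are using that multiplication by the fixed monomial $v$ carries distinct monomials of $f$ to distinct monomials of $fv$, so no cancellation can merge or destroy terms; this is immediate but is the step that licenses applying the term-by-term membership criterion to $fv$. In terms of trade-offs, the paper's citation keeps its preliminaries short, while your self-contained argument is more in the spirit of the paper's stated goal that everything be verifiable with only basic commutative algebra, and it would serve as a drop-in replacement for the citation.
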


\section{The associated graded ring corresponding to the filtration $\F:= \{ I_n\}_{n \geq 0}$.}
\label{cm-filtration}
Throughout this section we will work with the ring $T^{\prime}= \kk[x_2, x_3]$ and the ideals $J_1$, $J_2$ and $I_n$. Our goal is to compute  $\ell (T^{\prime}/ (I_{n} + (x_2)^2))$ and
$\ell (T^{\prime}/ ( I_{n} + (x_2^{2}, x_3^{2q+1} ) )) $. To attain our goal, we 
 show that $x_2^2$, $x_3^{2q+1}$ is  a regular sequence in $G(\F)$, where  $G(\F) := \oplus_{n \geq 0} I_n / I_{n+1}$ is the associated graded ring  corresponding to the filtration  ${\F}:= \{ I_n\}_{n \geq 0}$.

\begin{theorem}
\label{cohen macaulayness of G}
$G(\F)$ is Cohen-Macaulay.
\end{theorem}
\begin{proof} We first show that $(I_n : x_2^2)  = I_{n-1}$ for all $n \geq 2$. 
Clearly $x_2^2I_{n-1} \subseteq J_1 I_{n-1} \subseteq I_n$. Write 
$J_1^{a_1} 
= \sum_{i=0}^{a_1-1} x_2^{2(a_1-i)} x_3^{qi} (x_2, x_3)^i  + x_3^{qa_1} (x_2, x_3)^{a_1}$. Then 
\beq
\label{colon with x_2^2} \nno
&&         (I_n : x_2^2) \\ \nno
&=& \left(  \left( \sum_{a_1 + 2a_2 = n} J_1^{a_1} J_2^{a_2} \right) : x_2^2 \right)\\ \nno
&=&  \sum_{a_1 + 2a_2 = n}( J_1^{a_1} J_2^{a_2} : x_2^2)    \\ \nno
&=& \sum_{a_1 + 2a_2 =n}
        \sum_{i=0}^{a_1-1} (x_2^{2(a_1-i)} x_3^{q i + (2q+1)a_2} (x_2, x_3)^i   : x_2^2)
 +                            (    (x_3^{q a_1+(2q+1)a_2} (x_2, x_3)^{a_1} ): x_2^2) \\ \nno
 &=& \sum_{a_1 + 2a_2 =n}
        \sum_{i=0}^{a_1-1} (x_2^{2(a_1-i-1)} x_3^{q i + (2q+1)a_2} (x_2, x_3)^i )
        +  (x_3^{q a_1+(2q+1)a_2} (x_2, x_3)^{a_1-1} )  \\
         &\subseteq& I_{n-1}. 
\eeq
Let $\olin{\hphantom{xx}}$ denote the image in $R/ (x_2^2)$. Then
\beqn
\f{G(\F) }{({(x_2^2)}^{\star} ) }
&\cong& \bigoplus_{n \geq 0} \f{I_n}
                                                 { I_{n+1} +{x_2^2} I_{n-1}} = G( \olin{\F}).
                                                 \eeqn
 To  show that $\olin {x_3^{2q+1}}$  is a regular element in $G(\olin{\F})$, we  need to verify that  
 \beq
 \label{x3 is regular}    ((I_{n+2}  + {x_2^{2} I_{n+1}) : (x_3^{2q+1})) } 
 = I_{n} + x_2^2  I_{n-1}.
 \eeq  
 Let $m \geq 1$. Then
 \beq
\label{colon of j1 with x3} \nno
&& (J_1^{m+2} : x_3^{2q+1}) \\ \nno
&=&               \sum_{i=0}^{2} (  x_2^{2(m+2-i)}  x_3^{qi} (x_2, x_3)^{i} : x_3^{2q+1})
+                    \sum_{i=3}^{m+2} (  x_2^{2(m+2-i)}  x_3^{qi} (x_2, x_3)^{i} : x_3^{2q+1})  \\ \nno
&=&                   x_2^{2m}(x_2, x_3) 
+                    \sum_{i=3}^{m+2} (  x_2^{2(m+2-i)}  x_3^{q(i-3)} (x_2, x_3)^{i-3})
                        (x_2^3x_3^{q-1}, x_2^2x_3^q,  x_2x_3^{q+1}, x_3^{q+2}) \\ \nno
&\subseteq&   ((x_2^{2m)})
 +                   \sum_{i=3}^{m+2} (  x_2^{2(m+2-i)}  x_3^{q(i-3) +1} (x_2, x_3)^{i-3}) (x_2^2, x_2x_3^q, x_3^{q+1})\\
 &=& J_1^{m}.
\eeq
Hence 
\beq
\label{x3 is regular computation}\nno
&&                    { \displaystyle ((I_{n+2}  + {x_2^2} I_{n+1}) : (x_3^{2q+1})) } \\ \nno
 &=&               \sum_{a_1+ 2a_2 = n+2} (J_1^{a_1} J_2^{a_2} : x_3^{2q+1})
 +                    \sum_{a_1+ 2a_2 = n+1}( {x_2^{2} J_1^{a_1} J_2^{a_2}: (x_3^{2q+1}))}   \\ \nno
 &\subseteq&  \sum_{a_1+ 2a_2 = n+2; a_2 \not =0} (J_1^{a_1} J_2^{a_2-1})
  +                       (J_1^{n+2} : x_3^{2q+1}) \\ \nno
&& +                    \sum_{a_1+ 2a_2 = n+1; a_2 \not = 0}(x_2^2  J_1^{a_1} J_2^{a_2-1} )
 +                       (x_2^2J_1^{n+1} : x_3^{2q+1})   \\ \nno
 &\subseteq & \sum_{a_1+ 2a_2 = n+2; a_2 \not =0} (J_1^{a_1} J_2^{a_2-1})
  + J_1^n +    \sum_{a_1+ 2a_2 = n+1; a_2 \not = 0}(  x_2^2J_1^{a_1} J_2^{a_2-1} )
+ x_2^2J_1^{n-1} 
    \hspace{1.2in} \mbox{[by \eqref{colon of j1 with x3}]} \\ 
    &\subseteq &  I_{n} + x_2^2  I_{n-1}.
\eeq
The other inclusion is easy to verify. 
\end{proof}

We are now ready to prove the main result of this section.
\begin{proposition}
\label{computing full length}
For all $n \geq 1$,

\beqn
    \ell   \left( \f{\Tprime}{(I_{n} + (x_2^2 )  )\Tprime} \right)
&=&  \ell \left( \f{\Tprime}{(I_{n} )\Tprime}  \right) 
-  \ell \left( \f{\Tprime}{(I_{n-1} )\Tprime}  \right) ,\\
    \ell   \left( \f{\Tprime}{(I_n + (x_2^2, x_3^{2q+1} )  )\Tprime} \right)
&=&  \ell \left( \f{\Tprime}{(I_{n} )\Tprime}  \right)
 -  \ell \left( \f{\Tprime}{(I_{n-1} )\Tprime}  \right)
-  \ell \left( \f{\Tprime}{(I_{n-2} )\Tprime}  \right)
+   \ell \left( \f{\Tprime}{(I_{n-3} )\Tprime}  \right).
\eeqn
\end{proposition}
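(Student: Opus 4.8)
The plan is to derive both identities by multiplying the relevant finite–length cyclic modules over $\Tprime$ by the two regular elements $x_2^2$ and $x_3^{2q+1}$ and reading off lengths additively; the Cohen–Macaulayness of $G(\F)$ proved in Theorem~\ref{cohen macaulayness of G} enters precisely through the two colon identities it yields. Write $L_n:=\ell(\Tprime/I_n\Tprime)$ and $M_n:=\ell(\Tprime/(I_n+(x_2^2))\Tprime)$, with the convention $I_j=\Tprime$ (so $L_j=0$) for $j\le 0$; by Proposition~\ref{description of In} each of these modules has finite length. In this notation the two assertions read $M_n=L_n-L_{n-1}$ and $\ell(\Tprime/(I_n+(x_2^2,x_3^{2q+1}))\Tprime)=L_n-L_{n-1}-L_{n-2}+L_{n-3}$.

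For the first identity, multiplication by $x_2^2$ on $\Tprime$ induces an isomorphism $\Tprime/(I_n:x_2^2)\xrightarrow{\ \sim\ }(I_n+(x_2^2))/I_n$. The proof of Theorem~\ref{cohen macaulayness of G} gives $(I_n:x_2^2)=I_{n-1}$ for all $n\ge 1$, so $(I_n+(x_2^2))/I_n\cong\Tprime/I_{n-1}$. Substituting this into the short exact sequence
\[
0\to (I_n+(x_2^2))/I_n\to \Tprime/I_n\to \Tprime/(I_n+(x_2^2))\to 0
\]
and using additivity of length yields $L_n=L_{n-1}+M_n$, which is the first formula.

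For the second identity I iterate the device one step, now multiplying by $x_3^{2q+1}$ on the finite–length module $B:=\Tprime/(I_n+(x_2^2))\Tprime$. For an endomorphism of a finite–length module the kernel and cokernel have equal length; here the cokernel is $\Tprime/(I_n+(x_2^2,x_3^{2q+1}))$ and the kernel is $((I_n+(x_2^2)):x_3^{2q+1})/(I_n+(x_2^2))$, so
\[
\ell\!\left(\Tprime/(I_n+(x_2^2,x_3^{2q+1}))\right)=M_n-\ell\!\left(\Tprime/((I_n+(x_2^2)):x_3^{2q+1})\right).
\]
Everything then reduces to the colon identity
\[
((I_n+(x_2^2)):x_3^{2q+1})=I_{n-2}+(x_2^2),
\]
since it identifies the subtracted term with $M_{n-2}$, and the first formula turns $M_n-M_{n-2}$ into $L_n-L_{n-1}-L_{n-2}+L_{n-3}$.

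The colon identity is the only real obstacle. The inclusion $\supseteq$ is immediate: $I_{n-2}\,x_3^{2q+1}=I_{n-2}J_2\subseteq I_n$ and $(x_2^2)$ is carried into itself by the colon. For $\subseteq$ I would pass to $\olin{\Tprime}:=\Tprime/(x_2^2)$, where the claim becomes $(\olin{I_n}:x_3^{2q+1})=\olin{I_{n-2}}$, i.e.\ the assertion that the initial form of $\olin{x_3^{2q+1}}$ is a nonzerodivisor of filtration–degree $2$ in $G(\olin{\F})$ (one checks $x_3^{2q+1}\notin I_3+(x_2^2)$ to see the degree is exactly $2$). This is exactly the regularity statement \eqref{x3 is regular} established in Theorem~\ref{cohen macaulayness of G}, read through the standard dictionary ``$f^\ast$ is a nonzerodivisor in the associated graded ring $\iff$ $(\olin{I_m}:f)=\olin{I_{m-d}}$ for all $m$'' (valid since $\bigcap_n\olin{I_n}=0$). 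Alternatively, one may verify $\subseteq$ by a direct monomial computation with Proposition~\ref{prop-herzog}: a monomial $z$ with $zx_3^{2q+1}\in I_n+(x_2^2)$ either has $x_2$–degree $\ge 2$, whence $z\in(x_2^2)$, or has $x_2$–degree $\le 1$, in which case $zx_3^{2q+1}\in I_n$ and it suffices to check on the finitely many minimal generators of $I_n$ of $x_2$–degree $\le 1$ that dividing by $x_3^{2q+1}$ lands in $I_{n-2}$. Tracking these low–$x_2$–degree generators is the one genuinely computational point, and it is subsumed by the cleaner route through $G(\olin{\F})$.
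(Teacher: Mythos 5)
Your argument is correct, and at bottom it runs on the same engine as the paper's proof: the regular-sequence property of $x_2^2,\,x_3^{2q+1}$ on $G(\F)$ established in Theorem~\ref{cohen macaulayness of G}. The packaging, however, is genuinely different. The paper disposes of Proposition~\ref{computing full length} in one line by citing \cite[Proposition~2.4]{clare-shreedevi}, a general length formula for filtrations whose associated graded ring admits a regular sequence of initial forms; you reprove the needed special case by hand, via the short exact sequence for multiplication by $x_2^2$ and the kernel--cokernel length equality for the endomorphism of $\Tprime/(I_n+(x_2^2))$ induced by $x_3^{2q+1}$. This buys self-containedness, very much in the elementary spirit the paper advertises, at the cost of having to justify the colon identity $((I_n+(x_2^2)):x_3^{2q+1})=I_{n-2}+(x_2^2)$ yourself --- and there you are slightly too quick. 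That identity is not ``exactly'' \eqref{x3 is regular}: the paper's statement involves $x_2^2I_{n+1}$ and $x_2^2I_{n-1}$, not the full ideal $(x_2^2)$; equivalently, your dictionary refers to the associated graded ring of the image filtration $\{(I_m+(x_2^2))/(x_2^2)\}$, whereas \eqref{x3 is regular} concerns $G(\F)/(x_2^2)^{\star}G(\F)$, and these two graded rings coincide only because $(x_2^2)^{\star}$ is a nonzerodivisor on $G(\F)$. The bridge is easy and uses only what the paper proves: from $(I_m:x_2^2)=I_{m-1}$ one gets $I_m\cap(x_2^2)=x_2^2I_{m-1}$, which identifies the two graded rings; alternatively, since all ideals in sight are monomial, Proposition~\ref{prop-herzog} gives
$((I_n+(x_2^2)):x_3^{2q+1})=(I_n:x_3^{2q+1})+(x_2^2)\subseteq((I_n+x_2^2I_{n-1}):x_3^{2q+1})+(x_2^2)=I_{n-2}+(x_2^2)$,
the last equality being \eqref{x3 is regular} with the index shifted. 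Since your fallback ``direct monomial computation'' is left unfinished, this bridge must be stated explicitly; with it supplied, your proof closes, and your conventions $I_j=\Tprime$, $L_j=0$ for $j\le 0$ do handle the small values of $n$ correctly.
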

\begin{proof} The proof follows from  \cite[Proposition~2.4]{clare-shreedevi} and Theorem~\ref{cohen macaulayness of G}.
\end{proof}

\section{The inductive step}
In this section we describe the generators of $I_{n-1}$ modulo $(I_n : x_3^q)$. This will be used in our computations in the next section. 
\label{inductive}
\begin{lemma}
\label{ideal containment}
For all $n \geq 1$,
\been
\item
\label{ideal containment 1}
$ (I_n : x^q_3)  \subseteq I_{n-1}$.
\item
\label{ideal containment 2}
${\displaystyle
I_{n-1} = 
 \begin{cases}
 {\displaystyle \sum_{a_2=0}^{ \frac{n-2}{2}}    
   x_2^{2(n-1-2a_2) -1}   x_3^{(2q+1) a_2}  (x_2, x_3^q)   
+  (I_{n} : x_3^q )  }& \mbox{ if }2 \not |  (n-1 )\\
   \left( x_3^{(2q+1) \left( \frac{n-1}{2}\right)} \right)
+     {\displaystyle \sum_{a_2=0}^{ \frac{n-3}{2} }    
        x_2^{2(n-1-2a_2) -1}  x_3^{(2q+1)a_2}   (x_2, x_3^q)    
 +  (I_{n} : x_3^q )  } 
 &   \mbox{ if }2 |  (n-1 )
\end{cases}
}.
$
\eeen
\end{lemma}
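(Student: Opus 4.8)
The plan is to work throughout in $\Tprime=\kk[x_2,x_3]$ and to compute every colon generator-by-generator via Proposition~\ref{prop-herzog}. I will use the expansion $J_1^{a_1}=\sum_{i=0}^{a_1}x_2^{2(a_1-i)}x_3^{qi}(x_2,x_3)^i$ from the proof of Theorem~\ref{cohen macaulayness of G}, so that a typical monomial generator of $I_n$ is $x_2^{2a_1-i-l}x_3^{qi+l+(2q+1)a_2}$ with $a_1+2a_2=n$ and $0\le l\le i\le a_1$. Since colons of monomial ideals distribute over sums, all three statements reduce to divisibility bookkeeping among such monomials.

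For part~(\ref{ideal containment 1}) it suffices to show $(J_1^{a_1}J_2^{a_2}:x_3^q)\subseteq I_{n-1}$ for each $a_1+2a_2=n$. When $a_2\ge 1$ I would write $x_3^{2q+1}=x_3^q\cdot x_3^{q+1}$ and use $x_3^{q+1}\in J_1$ to get $J_1^{a_1}J_2^{a_2}\subseteq x_3^q J_1^{a_1+1}J_2^{a_2-1}\subseteq x_3^q I_{n-1}$; as $x_3$ is a nonzerodivisor this gives the colon inclusion at once. The only remaining case is $(J_1^{n}:x_3^q)\subseteq J_1^{n-1}$, which I would verify directly on the generators $x_2^{2n-i-l}x_3^{qi+l}$ by a short split according to whether $i=0$, $l<i$, or $l=i$, exhibiting in each case a generator of $J_1^{n-1}$ dividing the quotient by $\gcd(\,\cdot\,,x_3^q)$.

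For part~(\ref{ideal containment 2}), the inclusion ``$\supseteq$'' is immediate membership: multiplying $x_2^{2(n-1-2a_2)-1}x_3^{(2q+1)a_2}$ by $x_2$ lands in $J_1^{n-1-2a_2}J_2^{a_2}$, multiplying it by $x_3^q$ lands in the $i=1$ layer of the same ideal, and $x_3^{(2q+1)(n-1)/2}\in J_2^{(n-1)/2}$; with part~(\ref{ideal containment 1}) this gives one inclusion. The real content is ``$\subseteq$''. Here I would first pin down the minimal generators of $I_{n-1}$: using the recursion $I_N=J_1I_{N-1}+J_2I_{N-2}$ and Proposition~\ref{prop-herzog}, an induction shows they form three families indexed by $t$ --- even corners $x_2^{2(n-1-2t)}x_3^{(2q+1)t}$, odd corners $x_2^{2(n-1-2t)-1}x_3^{(2q+1)t+q}$, and shifted corners $x_2^{2(n-2-2t)}x_3^{(2q+1)t+q+1}$. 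A shifted corner $g$ satisfies $g\,x_3^q=x_2^{2(n-2-2t)}x_3^{(2q+1)(t+1)}\in J_1^{\,n-2t-2}J_2^{\,t+1}\subseteq I_n$, hence $g\in(I_n:x_3^q)$; the even and odd corners are exactly the monomials obtained by applying $(x_2,x_3^q)$ to the listed generators, so they are absorbed into the first summand of the statement. The even/odd dichotomy enters only at the top corner of smallest $x_2$-degree: when $2\mid(n-1)$ this corner is the even corner $x_3^{(2q+1)(n-1)/2}$, which is not absorbed by the colon and must be listed separately, whereas when $2\nmid(n-1)$ it is instead a shifted corner, a pure power of $x_3$ already lying in $(I_n:x_3^q)$, so no extra generator appears.

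The main obstacle is the inductive determination of the minimal generators of $I_{n-1}$ and the verification that precisely the shifted corners are swallowed by the colon. Writing $\beta_N(A)$ for the least $x_3$-exponent of a monomial of $I_N$ of $x_2$-degree $A$, this amounts to the threshold estimate $\beta_n(A)-\beta_{n-1}(A)\in\{q,q+1\}$, with value $q$ exactly at the $x_2$-degrees of the shifted corners --- so that a single factor $x_3^q$ climbs from the staircase of $I_{n-1}$ to that of $I_n$ --- and value $q+1$ at the even and odd corners, where it does not. Once this estimate is in hand, the remaining sorting of generators and the two parity cases are routine.
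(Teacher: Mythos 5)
Your proof is correct, and for part~(\ref{ideal containment 2}) it takes a genuinely different route from the paper's. The paper never identifies the generators of $I_{n-1}$: it proves, by induction on $a_1$, the single inclusion $J_1^{a_1}\subseteq x_2^{2a_1-1}(x_2,x_3^q)+(I_{a_1+1}:x_3^q)$, obtained by multiplying the inductive hypothesis by $J_1=x_2(x_2,x_3^q)+(x_3^{q+1})$ and absorbing every cross term into the colon ideal (e.g.\ $x_2^{2a_1-2}x_3^{2q}\cdot x_3^q\in J_1^{a_1-1}J_2\subseteq I_{a_1+1}$); it then multiplies this inclusion by $J_2^{a_2}=(x_3^{(2q+1)a_2})$ and uses $x_3^{(2q+1)a_2}(I_{n-2a_2}:x_3^q)\subseteq(I_n:x_3^q)$ to sweep up all of $I_{n-1}$, the reverse inclusion being the same membership check you make. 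You instead compute the full staircase of every $I_N$ --- your three families of even, odd and shifted corners --- by induction through the recursion $I_N=J_1I_{N-1}+J_2I_{N-2}$, and then sort: shifted corners climb into $(I_n:x_3^q)$ since one factor $x_3^q$ lands them in $J_1^{n-2t-2}J_2^{t+1}$, the even and odd corners are exactly the displayed summand, and the parity of $n-1$ decides whether the pure $x_3$-power is an even corner (which must be listed separately) or a shifted corner (already absorbed). I checked your deferred induction and it does close: every product of a generator of $J_1$ or $J_2$ with a corner of $I_{N-1}$ or $I_{N-2}$ is divisible by a corner of $I_N$, the only case that is not an exact match being $x_2x_3^q$ times an odd corner, which is divisible by the shifted corner $S_t$ precisely because $q+1\leq 2q$, i.e.\ $q\geq 1$ (true in this paper). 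So there is no gap. Your route is heavier but buys more: it yields the minimal generators of every $I_n$, hence of $\p^{(n)}$ modulo $x_1$ by Theorem~\ref{symbolic power}, and Propositions~\ref{length of last term} and~\ref{main theorem} could then be read off by direct counting; the paper's induction is leaner because it only ever tracks the part of $I_{n-1}$ not already swallowed by the colon. For part~(\ref{ideal containment 1}) the two arguments essentially coincide (split off $a_2\neq 0$, handled via $x_3^{q+1}\in J_1$, and check $a_2=0$ on generators), your cancellation argument $(J_1^{a_1}J_2^{a_2}:x_3^q)\subseteq(x_3^qI_{n-1}:x_3^q)=I_{n-1}$ being a slightly cleaner packaging of the paper's explicit colon computation.
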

\begin{proof}
\eqref{ideal containment 1} One can verify that 
\beqn
                        (I_n : x^q_3 )
&=&               \sum_{a_1+2a2=n; a_2 \not = 0} (J^{a_1}_1 J^{a_2}_2 : x^q_3) + (J_1^n:x_3^q)\\
&=&                \sum_{a_1+2a2=n;a2 \not =0}(x^{q+1}_3 )J^{ a_1}_1 J^{a_2-1}_2 
+                      (   x^{2n}_2) 
+                     \sum_{i=1}^n (x^{2(n-i)}_2 x^{q(i-1)}_3 (x_2, x_3^q)^i  )\\
&\subseteq &   I_{n-1}.
\eeqn

\noindent
\eqref{ideal containment 2}
As 
$                       I_1 = J_1 
=                x_2(x_2, x_3^q) + (x_3^{q+1}) \subseteq x_2(x_2, x_3^q)  +( I_2: x_3^q),
$
\eqref{ideal containment 2} is true for $n=1$. Let $n>1$. For all $a_1 \geq 1$, 
\beq
\label{in between step} \nno
                      && J_1^{a_1}  \\ \nno
&=&                 J_1 J_1^{a_1-1}\\ \nno
& \subseteq& (x_2(x_2, x_3^q), x_3^{q+1} )  \left(  x_2^{2a_1-3}  (x_2, x_3^q) + (I_{a_1} : x_3^q )\right)
                         \hspace{1.8in} \mbox{[by induction hypothesis]}     \\   \nno 
&=&            x_2^{2a_1-1}  (x_2, x_3^q)     +     (x_2^{2a_1-2} x_3^{2q} )+   x_2(x_2, x_3^q)       (I_{a_1} : x_3^q )      
 +       x_3^{q+1} x_2^{2a_1-3}  (x_2, x_3^q) +    x_3^{q+1} (I_{a_1} : x_3^q )    \\    
   &=&              x_2^{2a_1 -1}     (x_2, x_3^q)     + (I_{a_1 + 1} : x_3^q )   
                      \eeq
                     as
                     \beqn
                      (x_2^{2a_1-2} x_3^{2q} )x_3^q 
                      &=& x_2^{2(a_1-1)} x_3^{2q+1} x_3^{q-1} \subseteq J_1^{a_1-1} J_2^{a_2} \subseteq I_{a_1 -1 +2} = I_{a_1 + 1}\\
  \left( x_3^{q+1} x_2^{2a_1-3}  (x_2, x_3^q)   \right) x_3^q 
  &=    &  x_2^{2(a_1-2)} \cdot x_2(x_2, x_3^q) \cdot x_3^{2q+1}
  \subseteq   J_1^{a_1-1} J_2^{a_2} \subseteq I_{a_1 -1 +2} = I_{a_1 + 1}\\
     x_3^{q+1} (I_{a_1} : x_3^q )&  \subseteq   &   (I_{a_1+1} : x_3^q )        
                     \eeqn
Hence
\beqn
&&I_n \\
&=& 
\begin{cases}
{\displaystyle \sum_{a_2=0}^{ \frac{n-2}{2}}  J_1^{n-1-2a_2 } J_2^{a_2}} & 
    \mbox{ if }2 \not |  (n-1 )\\ 
J_2^{(n-1)/2}  
+  {\displaystyle \sum_{a_2=0}^{ \frac{n-3}{2} } J_1^{n-1-2a_2 } J_2^{a_2}} 
&  \mbox{ if }2 |  (n-1 )
\end{cases}\\
&\subseteq &
 \begin{cases}
 {\displaystyle \sum_{a_2=0}^{ \frac{n-2}{2}}    
   x_2^{2(n-1-2a_2) -1}   x_3^{(2q+1) a_2}  (x_2, x_3^q)   
+   x_3^{(2q+1) a_2}(I_{n-2a_2} : x_3^q )  }& \mbox{ if }2 \not |  (n-1 )\\
   x_3^{(2q+1) \left( \frac{n-1}{2}\right)} 
+     {\displaystyle \sum_{a_2=0}^{ \frac{n-3}{2} }    
        x_2^{2(n-1-2a_2) -1}  x_3^{(2q+1)a_2}   (x_2, x_3^q)    
 + x_3^{(2q+1)a_2} (I_{n-2a_2} : x_3^q )  } 
 &   \mbox{ if }2 |  (n-1 )
  \end{cases} \mbox{[by \eqref{in between step}]}\\
  &\subseteq &
 \begin{cases}
 {\displaystyle \sum_{a_2=0}^{ \frac{n-2}{2}}    
   x_2^{2(n-1-2a_2) -1}   x_3^{(2q+1) a_2}  (x_2, x_3^q)   
+  (I_{n} : x_3^q )  }& \mbox{ if }2 \not |  (n-1 )\\
   x_3^{(2q+1) \left( \frac{n-1}{2}\right)} 
+     {\displaystyle \sum_{a_2=0}^{ \frac{n-3}{2} }    
        x_2^{2(n-1-2a_2) -1}  x_3^{(2q+1)a_2}   (x_2, x_3^q)    
 +  (I_{n} : x_3^q )  } 
 &   \mbox{ if }2 |  (n-1 ).
  \end{cases}
\eeqn
This implies that $I_n \subseteq RHS$. The other inclusion follows from  \eqref{ideal containment 1} and checking element-wise. 
\end{proof}

\begin{proposition}
\label{length of last term}
For all $n \geq 1$,
\beqn
\dim_{\kk} \left( \f{I_{n-1} }
                  { (I_n : x_3^q)} \right) = n. 
\eeqn
\end{proposition}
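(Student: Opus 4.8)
The plan is to read off explicit monomial generators of the quotient directly from Lemma~\ref{ideal containment}\eqref{ideal containment 2} and then verify that they form a $\kk$-basis. Write $d = 2q+1$. For each admissible $a_2$ the summand $x_2^{2(n-1-2a_2)-1}x_3^{d a_2}(x_2,x_3^q)$ contributes the two monomials $x_2^{2(n-1-2a_2)}x_3^{d a_2}$ and $x_2^{2(n-1-2a_2)-1}x_3^{d a_2 + q}$, and when $n$ is odd there is in addition the single monomial $x_3^{d(n-1)/2}$. Counting: if $n$ is even there are $\tfrac n2$ values of $a_2$, hence $n$ monomials; if $n$ is odd there are $\tfrac{n-1}2$ values of $a_2$, giving $n-1$ monomials, plus the one extra generator, again $n$. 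Denote these $n$ distinct monomials by $m_1,\dots,m_n$. Since $(I_n:x_3^q)\subseteq I_{n-1}$ by Lemma~\ref{ideal containment}\eqref{ideal containment 1}, Lemma~\ref{ideal containment}\eqref{ideal containment 2} says precisely that $I_{n-1}/(I_n:x_3^q)$ is generated as a $\Tprime$-module by the images $\ov{m_1},\dots,\ov{m_n}$.

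Because $(I_n:x_3^q)$ is a monomial ideal (it is a colon of monomial ideals, computable by Proposition~\ref{prop-herzog}) and the $m_i$ are pairwise distinct monomials, the claim reduces to two monomial membership tests. For the bound $\dim_\kk \le n$ I would check that $x_2 m_i\in(I_n:x_3^q)$ and $x_3 m_i\in(I_n:x_3^q)$ for every $i$, equivalently $x_3^q x_2 m_i\in I_n$ and $x_3^{q+1}m_i\in I_n$; this makes each $\ov{m_i}$ annihilated by $(x_2,x_3)\Tprime$, so the $\Tprime$-generators $\ov{m_i}$ already $\kk$-span the quotient. For the bound $\dim_\kk \ge n$ I would check that no $m_i$ lies in $(I_n:x_3^q)$, i.e.\ $x_3^q m_i\notin I_n$ for every $i$; since the $m_i$ are distinct monomials and the ideal is monomial, this forces $\ov{m_1},\dots,\ov{m_n}$ to be $\kk$-linearly independent. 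Both tests amount to deciding divisibility by the minimal generators $x_2^{2b_0+b_1}x_3^{qb_1+(q+1)b_2+dc}$ of $I_n$, indexed by $b_0+b_1+b_2+2c=n$. The spanning statements are constructive (one exhibits a dividing generator) and I expect them to be routine; the genuine obstacle is the non-membership $x_3^q m_i\notin I_n$, which requires ruling out \emph{every} generator of $I_n$ as a divisor and so hinges on the arithmetic of the exponents $qb_1+(q+1)b_2+dc$ subject to $b_0+b_1+b_2+2c=n$.

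A cleaner route that avoids this case analysis is a length computation. From the exact sequence $0\to \Tprime/(I_n:x_3^q)\xrightarrow{\,\cdot x_3^q\,}\Tprime/I_n\to \Tprime/(I_n+(x_3^q))\to 0$ one gets $\ell\bigl(I_{n-1}/(I_n:x_3^q)\bigr)=\bigl[\ell(\Tprime/I_n)-\ell(\Tprime/I_{n-1})\bigr]-\ell\bigl(\Tprime/(I_n+(x_3^q))\bigr)$, using $(I_n:x_3^q)\subseteq I_{n-1}$. Proposition~\ref{computing full length} rewrites the bracketed difference as $\ell(\Tprime/(I_n+(x_2^2)))$. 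One then checks the two clean facts $(I_n+(x_3^q))=(x_2^{2n},x_3^q)$, whence $\ell(\Tprime/(I_n+(x_3^q)))=2nq$, and $\ell(\Tprime/(I_n+(x_2^2)))=n(2q+1)$ (the surviving generators modulo $x_2^2$ are the pure power $x_3^{(q+1)n-\lfloor n/2\rfloor}$ and $x_2 x_3^{t}$ with $t=(q+1)n-1-\lfloor (n-1)/2\rfloor$, and $\lfloor n/2\rfloor+\lfloor(n-1)/2\rfloor=n-1$). The answer is then $n(2q+1)-2nq=n$.
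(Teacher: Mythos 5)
Your proposal is correct, but the route that actually completes it (the length computation) is genuinely different from the paper's argument; your first sketch is, in effect, the paper's own proof. The paper proves the upper bound exactly as in your first paragraph: the $n$ generators from Lemma~\ref{ideal containment}\eqref{ideal containment 2} together with the observation $x_3^q(x_2,x_3)I_{n-1}\subseteq J_1I_{n-1}\subseteq I_n$ show the quotient is a $\kk$-vector space spanned by the images of those $n$ monomials. For the lower bound, however, the paper only remarks that these monomials have pairwise distinct $x_2$-degrees and are therefore linearly independent --- which tacitly assumes each image is nonzero, i.e.\ precisely the non-membership $x_3^qm_i\notin I_n$ that you single out as ``the genuine obstacle''; the paper never verifies it. Your length computation sidesteps exactly this point: combining the multiplication-by-$x_3^q$ exact sequence with Lemma~\ref{ideal containment}\eqref{ideal containment 1}, you get $\dim_{\kk}\bigl(I_{n-1}/(I_n:x_3^q)\bigr)=\ell\bigl(\Tprime/(I_n+(x_2^2))\bigr)-\ell\bigl(\Tprime/(I_n+(x_3^q))\bigr)$ via Proposition~\ref{computing full length}, whose proof (through Theorem~\ref{cohen macaulayness of G}) is independent of the present proposition, so there is no circularity even though the paper later uses this proposition to prove Proposition~\ref{main theorem}. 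The two remaining colengths are elementary monomial computations, and your stated values check out: every generator of $I_n$ other than $x_2^{2n}$ is divisible by $x_3^q$, so $I_n+(x_3^q)=(x_2^{2n},x_3^q)$ has colength $2qn$; and modulo $x_2^2$ the generators of $J_1^{a_1}J_2^{a_2}$ ($a_1+2a_2=n$) that survive are $x_3^{(q+1)n-a_2}$ and $x_2x_3^{(q+1)n-1-a_2}$, minimized at the largest admissible $a_2$, giving colength $\bigl[(q+1)n-\lfloor n/2\rfloor\bigr]+\bigl[(q+1)n-1-\lfloor(n-1)/2\rfloor\bigr]=n(2q+1)$. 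What your route buys: it uses only part \eqref{ideal containment 1} of Lemma~\ref{ideal containment} (the harder part \eqref{ideal containment 2} exists in the paper essentially only to feed this proposition), it is gap-free where the paper's independence argument is not, and as a bonus your identity $\ell(\Tprime/(I_n+(x_2^2)))=n(2q+1)$ telescopes through Proposition~\ref{computing full length} to yield Proposition~\ref{main theorem} without the induction run there. What the paper's route buys, when the non-membership check is actually supplied, is an explicit monomial $\kk$-basis of the quotient, which your counting argument does not exhibit.
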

\begin{proof}
Put $\m^{\prime} = (x_2, x_3)$. Then $x_3^q \m^{\prime} I_{n-1} \subseteq J_1 I_{n-1} \subseteq I_n$.  Hence 
by  Lemma~\ref{ideal containment}\eqref{ideal containment 2} we get 
\beqn
\dim_{\kk} \left( \f{I_{n-1} }
                  { (I_n : x_3^q)} \right)
&\leq & \begin{cases}
2(n/2)  & \mbox{ if } 2 \not  |n-1 \\
1 + [2(n-1)/2]  & \mbox{ if } 2 | n-1
\end{cases}
= n. 
\eeqn
To complete the proof we need to show that we have $n$ linearly independent elements. As all the elements in 
$I_{n-1} / (I_n : x_d)$ has the degree of $x_2$   different, they form a linearly independent set. 
\end{proof}

\section{Cohen-Macaulayness of $R/  (\p^{(n)}    + (\ff_k))$ }
\label{crucial}
Let $(\ff_1) := (f_1)$ and $ (\ff_2) := (f_1, f_2)$, where $f_1$ and $f_2$ are defined in  Lemma~\ref{noetherian}. 
The main step  in proving the Cohen-Macaulayness of $\R_{s}(\p)$ is to show that or all $n \geq 1$ and $ k =1,2$,  the rings  $R/  (\p^{(n)}    + (\ff_k) )$  are Cohen-Macaulay \cite{goto}. 
This was proved by Goto for $q=1$ and $n \leq {3}$ (\cite[Proposition~7.6]{goto}). We prove it for all $q \geq 1$ and for all $n$. 
As a consequence, we   prove that 
 $\p^{(n)} = {\mathcal I}_nR$ and
$(\p^{(n)} \Tprime) = I_n\Tprime$ for all   $n \geq 1$.  

We first compute the length of the  $T^{\prime}/ I_n$.  Next, we  prove an interesting result which gives the  the equality of the  lengths of the various modules
(Theorem~\ref{main theorem 1}, Theorem~\ref{bound on length}). 

\begin{proposition}
\label{main theorem}
For all $n \geq 1$, 
\beqn
      \ell \left( \f{\Tprime}{I_n} \right)
=     (2q+1){n+1 \choose 2}.
\eeqn
\end{proposition}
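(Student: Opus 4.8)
The plan is to prove the formula $\ell(\Tprime/I_n) = (2q+1)\binom{n+1}{2}$ by induction on $n$, using the length relations from Proposition~\ref{computing full length} together with the dimension count in Proposition~\ref{length of last term}. The base case $n=1$ is a direct computation: $I_1 = J_1 = (x_2^2, x_2x_3^q, x_3^{q+1})$, and one checks that $\ell(\Tprime/J_1) = 2q+1 = (2q+1)\binom{2}{2}$ by listing the standard monomials not in $J_1$.

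For the inductive step I would exploit the short exact sequence of $\Tprime$-modules
\beqn
0 \lrar \f{I_{n-1}}{(I_n : x_3^q)} \lrar \f{\Tprime}{(I_n : x_3^q)} \lrar \f{\Tprime}{I_{n-1}} \lrar 0,
\eeqn
which gives $\ell(\Tprime/(I_n : x_3^q)) = \ell(\Tprime/I_{n-1}) + n$, the last term being Proposition~\ref{length of last term}. The point of introducing the colon ideal is that multiplication by $x_3^q$ yields a second short exact sequence
\beqn
0 \lrar \f{\Tprime}{(I_n : x_3^q)} \overset{\cdot x_3^q}{\lrar} \f{\Tprime}{I_n} \lrar \f{\Tprime}{(I_n + (x_3^q))} \lrar 0,
\eeqn
so that $\ell(\Tprime/I_n) = \ell(\Tprime/(I_n : x_3^q)) + \ell(\Tprime/(I_n + (x_3^q)))$. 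Combining the two relations reduces the computation of $\ell(\Tprime/I_n)$ to the inductive value $\ell(\Tprime/I_{n-1})$, the integer $n$, and the quotient $\ell(\Tprime/(I_n + (x_3^q)))$. I expect $I_n + (x_3^q)$ to collapse to a simple monomial ideal in $\Tprime/(x_3^q) \cong \kk[x_2]$: since every generator of $J_1$ and $J_2$ except $x_2^2$ is divisible by $x_3^q$, one should get $I_n + (x_3^q) = (x_2^{2n}, x_3^q)$, whose colength is $2qn$. Granting this, the recursion becomes $\ell(\Tprime/I_n) = \ell(\Tprime/I_{n-1}) + n + 2qn$, which telescopes to $(2q+1)\binom{n+1}{2}$ and closes the induction.

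The main obstacle is the careful identification of $I_n + (x_3^q)$ (equivalently, the image of $I_n$ in $\kk[x_2]$). One must verify that no generator $J_1^{a_1}J_2^{a_2}$ with $a_1 + 2a_2 = n$ contributes a pure power of $x_2$ lower than $x_2^{2n}$; this follows because a pure $x_2$-power can only arise from the $x_2^2$-factor of each copy of $J_1$, forcing $a_2 = 0$ and $a_1 = n$ and hence $x_2^{2n}$. I would phrase this via Proposition~\ref{prop-herzog} or by direct monomial bookkeeping. As a sanity check, the alternative route using the two identities of Proposition~\ref{computing full length} (the regular-sequence consequence of Theorem~\ref{cohen macaulayness of G}) should reproduce the same recursion, since $\ell(\Tprime/(I_n + (x_2^2)))$ and $\ell(\Tprime/(I_n + (x_2^2, x_3^{2q+1})))$ are the finite differences of the sequence $(2q+1)\binom{n+1}{2}$; confirming consistency there would serve as an independent verification of the closed form.
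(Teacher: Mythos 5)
Your proposal is correct and follows essentially the same route as the paper: induction on $n$ via the exact sequence $0 \to \Tprime/(I_n : x_3^q) \xrightarrow{\cdot x_3^q} \Tprime/I_n \to \Tprime/(I_n + (x_3^q)) \to 0$, the containment $(I_n : x_3^q) \subseteq I_{n-1}$ from Lemma~\ref{ideal containment}, the dimension count $n$ from Proposition~\ref{length of last term}, and the identification $I_n + (x_3^q) = (x_2^{2n}, x_3^q)$ of colength $2qn$. Your explicit justification that no generator $J_1^{a_1}J_2^{a_2}$ with $a_2 > 0$ contributes a pure $x_2$-power is a detail the paper leaves implicit, but the argument is the same.
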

\begin{proof} We prove induction on $n$. If $n =1$, then 
\beqn
\ell \left( \f{\Tprime}{I_1} \right)
= \ell \left( \f{k[x_2,  x_3]} {(x_2^2, x_2 x_3^q, x_3^{q+1}) }\right)
 = 2q+1.
 \eeqn
Now let $n > 1$.
From the  exact sequence
\beqn
        0 
\lrar \f{\Tprime}{(I_n: x_3^q)} 
\stackrel{.x_3^q}{\lrar} \f{\Tprime}{I_n}
\lrar \f{\Tprime}{I_n + (x_3^q)}
\lrar 0  
\eeqn
we get
\beqn
&&        \ell \left( \f{\Tprime}{I_n}\right)\\
&=&      \ell \left( \f{\Tprime}{I_n + (x_3^q)}\right)
+            \ell \left( \f{\Tprime}{(I_n : x_3^q)}\right)\\
&=&       \ell \left( \f{\Tprime}{ (x^{2n}, x_3^q)}\right)
+            \ell \left( \f{\Tprime}{I_{n-1}}\right)
+            \ell \left( \f{I_{n-1}}{(I_n : x_3^q)}\right) 
              \hspace{2.17in} \mbox{[Lemma~\ref{ideal containment}(\ref{ideal containment 1})]}\\
&=&        2qn
+             (2q+1)  {n \choose 2}
+            n 
             \hspace{1.4in} 
             \mbox{[by induction hypothesis and Proposition~\ref{length of last term}]}\\
&= &      (2q+1) {n + 1 \choose 2}.
\eeqn
\end{proof}

\begin{theorem}
\label{main theorem 1}
For all $n \geq 1$, 
\beqn
    e   \left(x_1; \f{R}{\p^{(n)}}\right)
= \ell \left(  \f{R}{\p^{(n)} + (x_1)}\right)
&=&  \ell_R \left(\f{R}
                        { ({\mathcal I}_{n}, x_1) R}\right)
=    \ell_{\Tprime} \left(  \f{\Tprime} 
                                            { {\mathcal I}_{n}\Tprime }  
                                            \right)
= \ell \left( \f{\Tprime}{I_n} \right)
= (2q+1){n+1 \choose 2}.
\eeqn
\end{theorem}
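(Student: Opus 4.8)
The plan is to evaluate the multiplicity $e(x_1; R/\p^{(n)})$ on its own, observe that it equals the already-computed length $\ell(\Tprime/I_n)$ of Proposition~\ref{main theorem}, and then force all the intermediate quantities to the common value using the inclusion $\mathcal{I}_n R \subseteq \p^{(n)}$ of Proposition~\ref{description of In}(\ref{description of In one}). Thus the whole chain follows from a two-sided squeeze rather than from a direct comparison of the ideals $\p^{(n)}$ and $\mathcal{I}_n R$.

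First I would check that $x_1$ is a nonzerodivisor on $R/\p^{(n)}$. Since $\p^{(n)}$ is $\p$-primary we have $\ass(R/\p^{(n)}) = \{\p\}$, while $\phi(x_1) = t^{n_1} \neq 0$ shows $x_1 \notin \p$; hence $x_1$ avoids the unique associated prime. As $\dim R/\p^{(n)} = 1$, this makes $R/\p^{(n)}$ Cohen--Macaulay with $x_1$ a parameter acting as a nonzerodivisor, so its Hilbert--Samuel function is linear and
\[
e\!\left(x_1; \frac{R}{\p^{(n)}}\right) = \ell\!\left(\frac{R}{\p^{(n)} + (x_1)}\right).
\]
This gives the leftmost equality at no cost.

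Next I would compute the multiplicity independently by the associativity formula. Localizing at $\p$ and using $\p^{(n)} R_\p = (\p R_\p)^n$ together with the fact that $R_\p$ is a two-dimensional regular local ring, whence $\ell_{R_\p}(R_\p/(\p R_\p)^n) = \binom{n+1}{2}$, the associativity formula (summing over the unique minimal prime $\p$ of $\supp(R/\p^{(n)})$ with $\dim R/\p = 1$) yields
\[
e\!\left(x_1; \frac{R}{\p^{(n)}}\right) = \ell_{R_\p}\!\left(\frac{R_\p}{\p^n R_\p}\right)\, e\!\left(x_1; \frac{R}{\p}\right) = \binom{n+1}{2}(2q+1),
\]
where $e(x_1; R/\p) = 2q+1$ is the value recorded in the proof of Lemma~\ref{noetherian}. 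For the right-hand side of the asserted chain I would pass to $\Tprime$: flatness of $R$ over $T$ gives $(\mathcal{I}_n, x_1)R = \big((\mathcal{I}_n + (x_1))T\big)\otimes_T R$, so $R/(\mathcal{I}_n, x_1)R \cong M \otimes_T R$ with $M = T/(\mathcal{I}_n + (x_1))T \cong \Tprime/I_n$. By Proposition~\ref{description of In}(\ref{description of In three}) the ideal $(\mathcal{I}_n + (x_1))T$ is $\m$-primary, so $\supp_T M = \{\m\}$ and Lemma~\ref{comparing lengths} gives $\ell_R(M \otimes_T R) = \ell_T(M) = \ell(\Tprime/I_n)$, which equals $(2q+1)\binom{n+1}{2}$ by Proposition~\ref{main theorem}.

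It remains to close the loop. The inclusion $\mathcal{I}_n R \subseteq \p^{(n)}$ from Proposition~\ref{description of In}(\ref{description of In one}) gives
\[
\ell\!\left(\frac{R}{\p^{(n)} + (x_1)}\right) \leq \ell_R\!\left(\frac{R}{(\mathcal{I}_n, x_1)R}\right) = (2q+1)\binom{n+1}{2},
\]
while the two computations above show that the extreme left term already equals $(2q+1)\binom{n+1}{2}$. Hence the inequality is forced to be an equality and every quantity in the statement collapses to the common value. I expect the genuine content to sit in the independent evaluation of $e(x_1; R/\p^{(n)})$: producing the lower bound $(2q+1)\binom{n+1}{2}$ is exactly what upgrades the inclusion-induced inequality to an equality. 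If one wished to avoid the associativity formula, the obstacle would instead become a direct proof that $\p^{(n)} + (x_1) = \mathcal{I}_n R + (x_1)$, which is considerably less transparent; everything else is bookkeeping with flat base change and the length identifications already available.
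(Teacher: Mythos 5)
Your proposal is correct and is essentially the paper's own argument: the same squeeze via $\mathcal{I}_n R \subseteq \p^{(n)}$, the same identification $\ell_R(R/(\mathcal{I}_n,x_1)R) = \ell_{\Tprime}(\Tprime/I_n) = (2q+1)\binom{n+1}{2}$ through Lemma~\ref{comparing lengths} and Proposition~\ref{main theorem}, and the same use of the associativity formula (\cite[Theorem~14.7]{matsumura}) together with $\p^{(n)}R_\p = \p^n R_\p$ and $\ell_{R_\p}(R_\p/\p^n R_\p) = \binom{n+1}{2}$ to pin the multiplicity at the common value. The only difference is expository (you evaluate $e(x_1;R/\p^{(n)})$ up front and then close the chain, while the paper writes the inequalities first), so nothing further is needed.
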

\begin{proof} 
From  Proposition~\ref{description of In}(\ref{description  of In one}),   ${\mathcal I}_n  R\subseteq   \p^{(n)}$.  
Hence,
\beq
\label{equality of all terms 1}\nonumber
 e\left(x_1;\f{R}
                         {\p^{(n)}}\right)
&=&    \ell_R \left(\f{R}
                                  {\p^{(n)}+(x_1)}\right) 
                                    \hspace{1.8in} [\mbox   {as $R/ \p^{(n)}$  is Cohen-Macaulay}]\\
 &\leq&         \ell_R \left(\f{R}
                                  { ({\mathcal I}_{n}, x_1) R}\right).            
                                  \eeq
  By 
 Proposition~\ref{description of In}(\ref{description  of In three}), for any prime $\q \not = \m $,  $(({\mathcal I}_n , x_1)T)_{\q} = T$. Hence 
${\displaystyle \supp_T \left(  \f{T  }{ ( {\mathcal I}_{n}, x_1)T} \right) =\{\m\}} $
 and  we get  
  \beq
\label{equality of all terms 2}\nonumber
            \ell_R \left(\f{R}
                                  { ({\mathcal I}_{n}, x_1) R}\right)    \nonumber                   
&=&       \ell_{\Tprime} \left(  \f{\Tprime} 
                                            { {\mathcal I}_{n} \Tprime }  
                                            \right)              \hspace{3.05in} \mbox{[Lemma~\ref{comparing lengths}]}     \\    \nonumber
 &=&              \ell_{\Tprime} 
              \left(\f{\Tprime}
                       {I_n}\right)      \hspace{3.55in} 
               \mbox{  [\eqref{definition of Ji}]  }  \\ \nonumber    
& = &    (2q+1) \binom{n+1}{2}   
  \hspace{2.4in} 
               \mbox{  [Proposition~\ref{main theorem}]} \\     \nonumber    
 &=& e \left(x_1;\f{R}{\p}\right)
              \ell_{R_{\p}}
              \left(\f{R_{\p}}{\p^{n}R_{\p}}\right) \\ \nonumber
&=& e \left(x_1;\f{R}
                     {\p}\right)
             \ell_{R_{\p}}
             \left(\f{R_{\p}}
                       {\p^{(n)}R_{\p}}\right)     
                        \hspace{1.6in} [\mbox{since } \p^{(n)}R_{\p}=\p^{n}R_{\p}]\\          
&=&    e\left(x_1;\f{R}
                         {\p^{(n)}}\right)
                \hspace{2.3in} \mbox{[by \cite[Theorem 14.7]{matsumura}]}.    
\eeq
Thus equality holds in (\ref{equality of all terms 1})  and  (\ref{equality of all terms 2}) which proves the theorem.
\end{proof}

\begin{notation}
Let 
$\ff_1= f_1$, $\ff_2 = f_1, f_2$, 
$({\bf x_2}) =(x_2^2)$ and $({\bf x_3}) = (x_2^2, x_3^{2q+1})$ . 
\end{notation}

\begin{theorem}
\label{bound on length}
 Let $k=1,2$. Then for all $n \geq 1$, 
\beqn
   e   \left(x_1; \f{R}{\p^{(n)}   + (\ff_k)}\right)
= \ell_R \left(  \f{R}{\p^{(n)} + (x_1, \ff_k)}\right)
=    \ell_{\Tprime} \left(  \f{\Tprime} 
                                            { ({\mathcal I}_{n}  + \ff_k)\Tprime }  
                                            \right)
= \ell \left( \f{\Tprime}{I_n + ({\bf x_{k+1} )  } }\right).
\eeqn
In particular, 
$
 {\displaystyle \f{R}
{\p^{(n)}+(\ff_k )}}
$
is Cohen-Macaulay. 
\end{theorem}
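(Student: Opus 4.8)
The plan is to reprove the displayed chain exactly as in Theorem~\ref{main theorem 1}, now with the elements $\ff_k$ adjoined, and to extract Cohen--Macaulayness from the equality of multiplicity and colength that results. Write $A := R/(\p^{(n)}+(\ff_k))$. Since $f_1=g_3\in\p$ and $f_2\in\p^{(2)}\subseteq\p$ (Lemma~\ref{noetherian}) we have $\p^{(n)}+(\ff_k)\subseteq\p$, while $\p=\sqrt{\p^{(n)}}\subseteq\sqrt{\p^{(n)}+(\ff_k)}$; hence $\sqrt{\p^{(n)}+(\ff_k)}=\p$ and $\dim A=\dim R/\p=1$. As $x_1\notin\p$, its image is a system of parameters of $A$, so $e(x_1;A)\le \ell_R(A/x_1A)$ with equality if and only if $A$ is Cohen--Macaulay. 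Thus it is enough to prove the three displayed equalities: the Cohen--Macaulay assertion will then follow automatically from the coincidence of the first two terms.

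For the three rightmost equalities I would first feed Theorem~\ref{main theorem 1} the containment ${\mathcal I}_nR\subseteq\p^{(n)}$ from Proposition~\ref{description of In}: the nested ideals ${\mathcal I}_nR+(x_1)\subseteq\p^{(n)}+(x_1)$ have equal finite colength, hence are equal. Next, modulo $x_1$ one has $f_1=x_2^2-x_1x_3\equiv x_2^2$ and $f_2\equiv -x_3^{2q+1}$, so $(\ff_k)+(x_1)=({\bf x_{k+1}})+(x_1)$ and therefore $\p^{(n)}+(x_1,\ff_k)=({\mathcal I}_n+({\bf x_{k+1}}))R+(x_1)$. Because the support of this quotient is $\{\m\}$ (Proposition~\ref{description of In}), Lemma~\ref{comparing lengths} transfers the computation to $\Tprime$ and identifies $\ell_R(R/(\p^{(n)}+(x_1,\ff_k)))$ with $\ell_{\Tprime}(\Tprime/({\mathcal I}_n+\ff_k)\Tprime)=\ell(\Tprime/(I_n+({\bf x_{k+1}})))$, whose value is delivered by Proposition~\ref{computing full length}.

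It remains to match the leftmost term, i.e.\ to show $e(x_1;A)=\ell(\Tprime/(I_n+({\bf x_{k+1}})))$; once this holds the inequality $e(x_1;A)\le\ell_R(A/x_1A)$ is forced to be an equality and $A$ is Cohen--Macaulay. Here I would invoke the associativity formula \cite[Theorem~14.7]{matsumura}: $\p$ is the only minimal prime of $A$ with $\dim R/\p=1$ and $x_1\notin\p$, so
\[
 e(x_1;A)=e\!\left(x_1;R/\p\right)\cdot \ell_{R_\p}\!\left(R_\p/(\p^{n}R_\p+(\ff_k)R_\p)\right),
\]
using $\p^{(n)}R_\p=\p^{n}R_\p$ and recalling $e(x_1;R/\p)=2q+1$. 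The key local input is that $x_1,x_2,x_3$ are units in the two-dimensional regular local ring $R_\p$. For $k=1$, the Hilbert--Burch syzygy $x_3^qg_3=x_1g_1-x_2g_2$ \cite{herzog} shows $\p R_\p=(g_1,g_2)R_\p$ with $g_1,g_2$ a regular system of parameters, and exhibits $f_1=g_3$ as a unit combination of them, so $f_1\notin(\p R_\p)^2$; thus $R_\p/(f_1)$ is a discrete valuation ring and $\ell_{R_\p}(R_\p/((\p R_\p)^n+(f_1)))=n$, matching $\ell(\Tprime/(I_n+({\bf x_2})))=(2q+1)n$. For $k=2$, associativity applied to $R/(f_1,f_2)$ with $\ell_R(R/(x_1,f_1,f_2))=2(2q+1)$ (Lemma~\ref{noetherian}) gives $\ell_{R_\p}(R_\p/(f_1,f_2)R_\p)=2$, whence $(\p R_\p)^2\subseteq(f_1,f_2)R_\p$ and $\ell_{R_\p}(R_\p/((\p R_\p)^n+(f_1,f_2)))=2$ for $n\ge 2$ (and $1$ for $n=1$), matching $\ell(\Tprime/(I_n+({\bf x_3})))$.

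The main obstacle is exactly this reverse inequality, namely pinning down the local colengths at $\p$ and checking that they agree with the combinatorial values of Proposition~\ref{computing full length}; the delicate point is the order-one computation $f_1\notin(\p R_\p)^2$, which is what makes $\ell_{R_\p}(R_\p/((\p R_\p)^n+(f_1)))$ grow linearly in $n$ and thereby saturate the inequality $e(x_1;A)\le\ell_R(A/x_1A)$. The two rightmost equalities, by contrast, are essentially bookkeeping on top of Theorem~\ref{main theorem 1} and Lemma~\ref{comparing lengths}.
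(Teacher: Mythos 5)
Your argument is correct, but it closes the proof by a genuinely different route than the paper. The paper's proof is a cyclic chain of inequalities: $e(x_1;R/(\p^{(n)}+(\ff_k)))\le \ell_R(R/(\p^{(n)}+(x_1,\ff_k)))\le \ell_{\Tprime}(\Tprime/(I_n+({\bf x_{k+1}})))$, where the last length is turned into an alternating sum of the local lengths $\ell_{R_\p}(R_\p/\p^jR_\p)$ via Proposition~\ref{computing full length} and Theorem~\ref{main theorem 1}, and then \cite[Corollary~2.6]{clare-shreedevi} is cited to say this alternating sum is again $e(x_1;R/(\p^{(n)}+(\ff_k)))$, forcing equality throughout. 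You do two things differently: first, you upgrade the middle inequality to an outright ideal identity $\p^{(n)}+(x_1)={\mathcal I}_nR+(x_1)$ (legitimate, since both sides are nested and have the same finite colength by Theorem~\ref{main theorem 1}), so the second and third displayed quantities are equal by Lemma~\ref{comparing lengths} without any circular argument; second, and more substantially, you replace the citation of \cite[Corollary~2.6]{clare-shreedevi} by a direct computation of the multiplicity through the associativity formula, using the syzygy $x_3^qg_3=x_1g_1-x_2g_2$ to get $\p R_\p=(g_1,g_2)R_\p$ and $f_1\notin\p^2R_\p$ (so $R_\p/(f_1)$ is a DVR and the local colengths are $n$), and for $k=2$ using $\ell(R/(x_1,f_1,f_2))=2(2q+1)$ from Lemma~\ref{noetherian} to pin down $\ell_{R_\p}(R_\p/(f_1,f_2)R_\p)=2$ and hence $(\p R_\p)^2\subseteq(f_1,f_2)R_\p$. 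This is precisely the content the paper outsources to the companion paper, so your proof is more self-contained at the cost of the explicit local algebra at $\p$; the paper's version is shorter but opaque about why the multiplicity saturates the bound. Two points to tighten: the associativity bound together with $\ell(R/(x_1,f_1,f_2))=2(2q+1)$ only yields $\ell_{R_\p}(R_\p/(f_1,f_2)R_\p)\le 2$, and the reverse inequality needs the observation that $(f_1,f_2)R_\p=\p R_\p$ is impossible (since $f_2\in\p^2R_\p$, Nakayama would make $\p R_\p$ principal, contradicting $\dim R_\p=2$); also, before applying associativity to $R/(f_1,f_2)$ you should record that $\dim R/(f_1,f_2)=1$ and that $\p$ is minimal over $(f_1,f_2)$, both of which follow from the finiteness of $\ell(R/(x_1,f_1,f_2))$. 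These are one-line repairs, not gaps in the strategy.
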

\begin{proof}
From  Proposition~\ref{description of In}(\ref{description  of In one})  $({\mathcal I}_n  ,x_1,\ff_k ) R\subseteq  
( \p^{(n)} ,x_1,\ff_k ) R$.  
Hence 
 \beq
\label{multiplicity inequality 1}\nonumber
                 e \left(x_1; \f{R}
                            {\p^{(n)} + (\ff_k)} \right)  
&\leq &  \ell_R \left( \f{R}
                               {\p^{(n)}+(x_1,\ff_k )}\right) \hspace{1.6in} \mbox{\cite[Theorem~14.10]{matsumura}}\\
&\leq&   \ell_R \left(  \f{R} 
                                 { ({\mathcal I}_{n}, x_1, \ff_k)R }\right) .
\eeq
Since $({\mathcal I}_n, x_1) T \subseteq ({\mathcal I}_n, \ff_k, x_1) T$  by Proposition~\ref{description of In}(\ref{description  of In three}), for any prime $\q \not = \m $,  $(({\mathcal I}_n , \ff_k, x_1)T)_{\q} = T$. This implies that 
${\displaystyle \supp_T \left(  \f{T} { ({\mathcal I}_{n},  \ff_k, x_1)T } \right) =\{\m\} }$.   Hence  we get 

\beq   
\label{multiplicity inequality 2}
\nonumber     
&&   \ell_R \left(  \f{R} 
                                 { ({\mathcal I}_{n},x_1, \ff_k)R }\right)\\\nonumber
&=&        \ell_T \left(  \f{T} 
                                  {( {\mathcal I}_{n}, x_1, \ff_k) T}  \right)    
               \hspace{3.6in} \mbox{[Lemma~\ref{comparing lengths}]}    
              \\ \nonumber
&=&  \ell_{\Tprime} \left(  \f{\Tprime} 
                                              {  { I}_{n} + (  {\bf x_{k+1}} )}  \right) 
                              \\ \nonumber
&=&    
\begin{cases}
            \ell \left( \f{\Tprime}{(I_{n} )\Tprime}  \right) 
-           \ell \left( \f{\Tprime}{(I_{n-1} )\Tprime}  \right) ,& \mbox{ if } k=1\\
            \ell \left( \f{\Tprime}{(I_{n} )\Tprime}  \right)
 -          \ell \left( \f{\Tprime}{(I_{n-1} )\Tprime}  \right)
-           \ell \left( \f{\Tprime}{(I_{n-2} )\Tprime}  \right)
+          \ell \left( \f{\Tprime}{(I_{n-3} )\Tprime}  \right)  & \mbox{ if } k=2
              \end{cases}
               \hspace{0.25in} \mbox{  [Proposition~\ref{computing full length}]}   \\ \nonumber    
&=&
\begin{cases}
  \ell \left( \f{R_{\p}}{\p^{n} R_{\p}}  \right) -   \ell \left( \f{R_{\p}}{\p^{n-1} R_{\p}} \right)    
  & \mbox{ if } k=1\\ 
   \ell \left( \f{R_{\p}}{\p^{n} R_{\p}}  \right) -   \ell \left( \f{R_{\p}}{\p^{n-1} R_{\p}} \right)  
-   \ell \left( \f{R_{\p}}{\p^{n-2} R_{\p}} \right)  +    \ell \left( \f{R_{\p}}{\p^{n-3} R_{\p}} \right) 
  & \mbox{ if } k=2     
  \end{cases}
    \hspace{0.9in}  \mbox{[Theorem \ref{main theorem 1}]}    \\
&=& e \left(x_1; \f{R}
                            {\p^{(n)} + (\ff_k)} \right)   
                            \hspace*{3.4in} \mbox{\cite[Corollary~2.6]{clare-shreedevi}}.                                  
  \eeq 
  Hence equality holds in \eqref{multiplicity inequality 1} and \eqref{multiplicity inequality 2} which proves the theorem.
\end{proof}

We end this section by explicitly describing the generators of $\p^{(n)}$ for all $n \geq 1$. 

\begin{theorem}
\label{symbolic power}
For all $n \geq 1$,  $\p^{(n)} = {\mathcal{I}}_n R$; 
 ${\displaystyle  \p^{(n)}
= \sum_{a_1 + 2a_2=n} 
  \p^{a_1} (\p^{(2)})^{a_2}  }$ and 
  $\p^{(n)} T^{\prime}= I_n =  {\mathcal I}_n \Tprime$. 
\end{theorem}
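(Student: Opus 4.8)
The plan is to combine the containment ${\mathcal I}_n R \subseteq \p^{(n)}$ from Proposition~\ref{description of In}(\ref{description of In one}) with the numerical equality of lengths established in Theorem~\ref{main theorem 1}: the inclusion gives a surjection, the equal lengths force it to be an isomorphism modulo $x_1$, and a Nakayama argument then lifts this to an honest equality of ideals. The two remaining identities will follow formally once $\p^{(n)} = {\mathcal I}_n R$ is in hand.

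First I would observe that ${\mathcal I}_n R \subseteq \p^{(n)}$ yields a surjection
\[
R/({\mathcal I}_n R + (x_1)) \twoheadrightarrow R/(\p^{(n)} + (x_1)).
\]
By Theorem~\ref{main theorem 1} both sides have the same finite length $(2q+1)\binom{n+1}{2}$, and a surjection of modules of equal finite length is an isomorphism; hence its kernel vanishes and ${\mathcal I}_n R + (x_1) = \p^{(n)} + (x_1)$. In particular $\p^{(n)} \subseteq {\mathcal I}_n R + (x_1)$.

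Next I would set $N := \p^{(n)}/{\mathcal I}_n R$, a finitely generated $R$-module, and show $N = x_1 N$. The crucial input is that $x_1$ is a nonzerodivisor on $R/\p^{(n)}$: the ring $R/\p^{(n)}$ is Cohen--Macaulay of dimension one (as used in the proof of Theorem~\ref{main theorem 1}), its only associated prime is $\p$ since $\p^{(n)}$ is $\p$-primary, and $x_1 \notin \p$ because $\phi(x_1)=t^{n_1}\neq 0$; so $x_1$ avoids $\ass(R/\p^{(n)})=\{\p\}$. Now for $y \in \p^{(n)}$ write $y = z + x_1 w$ with $z \in {\mathcal I}_n R$ and $w \in R$. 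Then $x_1 w = y - z \in \p^{(n)}$, and regularity of $x_1$ on $R/\p^{(n)}$ forces $w \in \p^{(n)}$; passing to $N$ gives $\bar y = x_1 \bar w \in x_1 N$. Since $y$ was arbitrary, $N = x_1 N$, and as $x_1 \in \m$ with $N$ finitely generated, Nakayama's lemma gives $N = 0$, i.e. $\p^{(n)} = {\mathcal I}_n R$.

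For the product formula, I would sandwich using $\J_2 R = (f_2)R \subseteq \p^{(2)}$ and the symbolic-power containment $\p^{a_1}(\p^{(2)})^{a_2} \subseteq \p^{(a_1+2a_2)}=\p^{(n)}$ already recorded in \eqref{containment of J}:
\[
{\mathcal I}_n R \;\subseteq\; \sum_{a_1 + 2a_2 = n} \p^{a_1}(\p^{(2)})^{a_2} \;\subseteq\; \p^{(n)} \;=\; {\mathcal I}_n R,
\]
so all inclusions are equalities. For the statement over $\Tprime$, I would reduce $\p^{(n)} = {\mathcal I}_n R$ modulo $x_1$, using the flat identification ${\mathcal I}_n R = {\mathcal I}_n T \otimes_T R$ and the definition $I_n = {\mathcal I}_n \Tprime$ from \eqref{definition of Ji}, to obtain $\p^{(n)}\Tprime = {\mathcal I}_n \Tprime = I_n$. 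The main obstacle is precisely the middle step: promoting the equality modulo $x_1$ to the full ideal equality, for which the essential observation is that $x_1$ is regular on $R/\p^{(n)}$ so that Nakayama applies to $N = \p^{(n)}/{\mathcal I}_n R$; all the genuinely numerical work has already been absorbed into Theorem~\ref{main theorem 1}.
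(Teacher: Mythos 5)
Your proof is correct and takes essentially the same route as the paper: equality of the two ideals modulo $x_1$ forced by the length computation of Theorem~\ref{main theorem 1}, then Nakayama's lemma using that $x_1$ is a nonzerodivisor on $R/\p^{(n)}$, followed by the same sandwich argument for the product formula and reduction to $T^{\prime}$ for the last identity. The only difference is expository: you package the Nakayama step as $N=\p^{(n)}/{\mathcal I}_nR$ satisfying $N=x_1N$, whereas the paper writes $\p^{(n)}={\mathcal I}_nR+x_1(\p^{(n)}:x_1)$ and invokes $(\p^{(n)}:x_1)=\p^{(n)}$, which is the identical argument.
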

\begin{proof} 
By Theorem~\ref{main theorem 1}  we get 
$\p^{(n) }  + (x_1)= \I_n R + (x_1).  $ 
Hence  $\p^{(n)} =  {\mathcal I}_n R + x_1 (  \p^{(n)}  :(x_1))$. As $x_1$ is a nonzerodivisor on $R/ \p^{(n)}$, 
 $(  \p^{(n)}  :(x_1)) = \p^{(n)}$. By Nakayama's lemma, $\p^{(n)} =  {\mathcal I}_nR$. 
 
  For all $n \geq 3$, applying Proposition~\ref{description of In}(\ref{description  of In one})   we get
  \beqn
  \p^{(n) } 
  = {\mathcal I}_n R
  = \sum_{a_1 + 2a_2=n} 
           {\mathcal J}_1^{a_1}{\mathcal J}_2^{a_2} R           
 \subseteq      \sum_{a_1 + 2a_2=n} 
         \p^{a_1} (\p^{(2)})^{a_2}
         &\subseteq& \p^{(n)}. 
  \eeqn
 Hence equality holds. 
 The last equality follows from  Theorem~\ref{main theorem 1}. 
\end{proof}

\section{Cohen-Macaulayness  and Gorensteinness of symbolic blowup algebras}
\label{the main section}
 In this section we show that both symbolic blowup  algebras $G_{s}(\p)$ and $\R_s(\p)$ are Cohen-Macaulay and Gorenstein. 
From Proposition~3.7 in \cite{goto-nis-shim-2} it follows that $G_{s}(\p)$ is Cohen-Macaulay. We use the results in this paper to prove it. 

\begin{theorem}
\label{cm-ass-gr}
\been
\item
\label{cm-ass-gr-one}
$G_{s}(\p)$ is Cohen-Macaulay.
\item
\label{cm-ass-gr-two}
$G_{s}(\p)$ is Gorenstein.
\eeen
 \end{theorem}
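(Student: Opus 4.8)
The plan is to deduce both statements from the Cohen--Macaulayness of $G(\F)$ (Theorem~\ref{cohen macaulayness of G}) by cutting $G_s(\p)$ down along a regular sequence whose first member is the degree-zero element $x_1^{\ast}$. First I would check that $x_1^{\ast}$ is a nonzerodivisor on $G_s(\p)$: if $\xi\in\p^{(n)}$ satisfies $x_1\xi\in\p^{(n+1)}$, then since $x_1\notin\p$ is a nonzerodivisor on the one-dimensional Cohen--Macaulay ring $R/\p^{(n+1)}$ we get $\xi\in\p^{(n+1)}$, so $x_1^{\ast}$ annihilates nothing in positive filtration degree. The same regularity of $x_1$ on each $R/\p^{(n)}$ gives $\p^{(n)}\cap(x_1)=x_1\p^{(n)}$, and combined with $\p^{(n)}\Tprime=I_n$ (Theorem~\ref{symbolic power}) this yields the graded isomorphism
\[
G_s(\p)/(x_1^{\ast})\;\cong\;\bigoplus_{n\ge 0}\f{\p^{(n)}+(x_1)}{\p^{(n+1)}+(x_1)}\;\cong\;\bigoplus_{n\ge 0}\f{I_n}{I_{n+1}}\;=\;G(\F).
\]
As $G(\F)$ is Cohen--Macaulay by Theorem~\ref{cohen macaulayness of G} and $x_1^{\ast}$ is regular, $G_s(\p)$ is Cohen--Macaulay, proving \eqref{cm-ass-gr-one}.

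For \eqref{cm-ass-gr-two} I would use that a Cohen--Macaulay graded ring is Gorenstein exactly when its type is $1$, i.e.\ when the socle of an Artinian reduction is one-dimensional. Modulo $x_1$ the elements $f_1=g_3$ and $f_2$ reduce to $x_2^2$ and $-x_3^{2q+1}$, whose initial forms are precisely the regular sequence on $G(\F)$ built in the proof of Theorem~\ref{cohen macaulayness of G}; hence $x_1^{\ast},f_1^{\ast},f_2^{\ast}$ is a maximal regular sequence on $G_s(\p)$ and the Artinian reduction is
\[
A:=G_s(\p)/(x_1^{\ast},f_1^{\ast},f_2^{\ast})\;\cong\;G(\F)/(x_2^2,x_3^{2q+1}).
\]
From $\ell(\Tprime/I_n)=(2q+1)\binom{n+1}{2}$ (Proposition~\ref{main theorem}) the Hilbert series of $G(\F)$ is $(2q+1)(1-t)^{-2}$, so cutting by the regular sequence of degrees $1$ and $2$ gives $H_A(t)=(2q+1)(1+t)$. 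Thus $A=A_0\oplus A_1$ with $A_0=\Tprime/I_1$ and $\dim_{\kk}A_1=2q+1$, and it remains only to prove $\dim_{\kk}\operatorname{soc}(A)=1$.

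The delicate point, and what I expect to be the main obstacle, is that $A_0=\Tprime/(x_2^2,x_2x_3^q,x_3^{q+1})$ is itself \emph{not} Gorenstein: its socle $\langle x_3^q,\ x_2x_3^{q-1}\rangle$ is two-dimensional, so the socle of $A$ cannot be read off from its degree-zero part and one must work with the full ring structure. The key observation is that the maximal ideal of $A$ is \emph{not} generated by $x_2,x_3$: the module $A_1$ contributes the further generators $\overline{x_2x_3^q}$ and $\overline{x_3^{q+1}}$ (the images of the corresponding generators of $I_1$), and multiplying by these degree-one elements throws the candidate socle of $A_0$ into $A_1$. Concretely $\overline{x_2x_3^q}\cdot x_3^q=\overline{x_3^{q+1}}\cdot x_2x_3^{q-1}=\overline{x_2x_3^{2q}}\ne 0$, so neither $x_3^q$ nor $x_2x_3^{q-1}$ survives in $\operatorname{soc}(A)$, whence $\operatorname{soc}(A)\cap A_0=0$. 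A direct inspection of the monomial basis of $A_1=I_1/(x_2^2,x_3^{2q+1})$ then shows that the unique class killed by both $x_2$ and $x_3$ is $\overline{x_2x_3^{2q}}$. Therefore $\operatorname{soc}(A)=\kk\cdot\overline{x_2x_3^{2q}}$ is one-dimensional, the type of $G_s(\p)$ equals $1$, and $G_s(\p)$ is Gorenstein. Everything outside this socle bookkeeping is formal once Theorem~\ref{cohen macaulayness of G} and the length formula of Proposition~\ref{main theorem} are available.
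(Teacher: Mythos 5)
Your proof is correct, and it takes a genuinely different route from the paper's in both halves. For Cohen--Macaulayness the paper cites \cite[Corollary~3.2]{goto-nis-shim-2} and verifies that $x_1^{\star},f_1^{\star},f_2^{\star}$ is a regular sequence on $G_s(\p)$ by length computations: it proves $\ell(R/((\p^{(n+1)},x_1):f_1))=\ell(R/(\p^{(n)},x_1))$ using Theorem~\ref{bound on length} together with the colon identity $(I_{n+1}:x_2^2)=I_n$ from the proof of Theorem~\ref{cohen macaulayness of G}, and argues similarly for $f_2$. You instead use Theorem~\ref{symbolic power} to identify $G_s(\p)/(x_1^{\ast})$ with $G(\F)$ outright and then quote Theorem~\ref{cohen macaulayness of G}; this is shorter and non-circular (Theorem~\ref{symbolic power} is established before Section~\ref{the main section}), with one small point you should make explicit: since $\p^{(n)}$ lives in $R=\kk[[x_1,x_2,x_3]]$, the quotient is a priori $\bigoplus_n I_n\kk[[x_2,x_3]]/I_{n+1}\kk[[x_2,x_3]]$, and this equals $G(\F)$ because each $I_n/I_{n+1}$ has finite length supported at $(x_2,x_3)$, so passing to the power series ring changes nothing (cf.\ Lemma~\ref{comparing lengths}). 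The divergence is larger for Gorensteinness: the paper settles it in one line by citing \cite[Corollary~5.8]{goto} plus the fact that $G(\p R_{\p})$ is a polynomial ring, whereas you compute the Artinian reduction $A\cong G(\F)/(x_2^2,x_3^{2q+1})$, read off $H_A(t)=(2q+1)(1+t)$ from Proposition~\ref{main theorem}, and locate the socle directly. Your bookkeeping checks out: $I_2+(x_2^2)=(x_2^2,x_3^{2q+1})$, so $A_1=I_1/(x_2^2,x_3^{2q+1})$ has monomial basis $\{x_2x_3^j\}_{j=q}^{2q}\cup\{x_3^j\}_{j=q+1}^{2q}$; the two socle elements $x_3^q$ and $x_2x_3^{q-1}$ of $A_0$ are carried to the nonzero class $\overline{x_2x_3^{2q}}$ by the $A_0$-module generators $\overline{x_2x_3^{q}}$ and $\overline{x_3^{q+1}}$ of $A_1$, and the unique basis monomial of $A_1$ killed by both $x_2$ and $x_3$ is $x_2x_3^{2q}$; since the socle is a graded submodule, this gives type one. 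What your argument buys is a self-contained, elementary proof of Gorensteinness with an explicit socle generator, very much in the spirit the paper advertises; what the paper's argument buys is brevity, by outsourcing the Gorenstein criterion to Goto's memoir.
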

 \begin{proof} (1) 
By \cite[Corollary~3.2]{goto-nis-shim-2}, it is enough to show that  $x_1^{\star}, f_1^{\star},  f_{2}^{\star}$  is a regular sequence in $G_{s}(\p)$. 
For all $n ]geq 1$,   $x_1^t$ is a nonzerodivisor on $R/ \p^{(n)}$ for all $t$, Hence $x_1^{\star}$ is a regular element in $G_s(\p)$ and 
\beqn
G_s \left(\f{\p+x_1R}{x_1R} \right)
\cong \bigoplus_{n \geq 0} \f{  \p^{(n)} + x_1R }{  \p^{(n+1)} + x_1R }
\cong \bigoplus_{n \geq 0} \f{  \p^{(n)} }{  \p^{(n+1)} + x_1 \p^{(n)} }
\cong \f{G_s(\p)}{x_1^{\star} G_s(\p)}.
\eeqn
To show that $f_1^{\star}$ is a regular sequence we need to show that $((\p^{(n)} + x_1R) : f_1 )= \p^{(n)} + x_1R$. 
Now
 \beqn
          \ell \left( \f{R}
                         {((\p^{(n+1)}, x_1 )  : (f_1))}  \right) 
 &=&  \ell \left( \f{R}{(\p^{(n+1)}, x_1 )} \right) 
 -        \ell \left( \f{R}{(\p^{(n+1)}, x_1, f_{1} )} \right)  
          \\
 &=&  \ell \left( \f{T^{\prime}}{ I_{n+1}  }\right)
 -        \ell \left( \f{T^{\prime}}{ I_{n+1} + ( x_2^2 )}\right) 
 \hspace{1.4in} \mbox{[Theorem~\ref{bound on length}]}\\
 &=&  \ell \left( \f{T^{\prime}}{  (I_{n+1}  : x_{2}^{2}   )}\right)\\
 &=&  \ell \left( \f{T^{\prime}}{ I_{n}}\right) 
             \hspace{2.2in} \mbox{[proof of Theorem~\ref{cohen macaulayness of G}]} \\
 &=&  \ell \left( \f{R}
                         {(\p^{(n)}, x_1 )} \right) \hspace{2.3in} \mbox{[Theorem~\ref{bound on length}]}.
 \eeqn
 Similarly, one can show that 
 $((\p^{(n+1)}, x_1, f_1 )  : (f_2)) = (\p^{(n-1)}, x_1 , f_1)$ which will imply that $f_2^{\star}$ is a nonzerodivisor on $G_s(\p) / (x_1^{\star}, f_1^{\star})$.
 This proves (\ref{cm-ass-gr-one}). 
 
 (\ref{cm-ass-gr-two}) As $G(\p R_{\p})$ is a polynomial  ring, it is Gorenstein. Hence the result follows from Theorem~\ref{bound on length} and \cite[Corollary~5.8]{goto}. 
 \end{proof}
 
 \begin{theorem} 
(\cite[Theorem~4.1]{goto-nis-shim-2}, \cite[Theorem~2]{schenzel}
 \label{cm-rees}
\been
\item
 \label{cm-rees-zero}
  $\R_s(\p)= R[\p t, {\mathcal J_2}t^2] =R[\p t, f_2t^2]  $.
\item
 \label{cm-rees-one}
 $\R_s(\p)$ is Cohen-Macaulay.

\item

 \label{cm-rees-two}
$\R_s(\p)$ is Gorenstein.
\eeen
\end{theorem}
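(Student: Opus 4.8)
The plan is to deduce all three parts from two structural facts already established: the complete description of the symbolic powers (Theorem~\ref{symbolic power}) and the Cohen--Macaulay/Gorenstein property of the associated graded ring $G_s(\p)$ (Theorem~\ref{cm-ass-gr}). Part (\ref{cm-rees-zero}) is essentially a reformulation of Theorem~\ref{symbolic power}. By that theorem $\p^{(n)} = \I_n R = \sum_{a_1+2a_2=n}\J_1^{a_1}\J_2^{a_2}R = \sum_{a_1+2a_2=n}\p^{a_1}(f_2)^{a_2}$, since $\J_1 R = \p$ and $\J_2 = (f_2)$. The right-hand side is exactly the degree-$n$ component of the $R$-subalgebra of $R[t]$ generated by the degree-one elements $\p t$ and the degree-two element $f_2 t^2$. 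Comparing graded pieces gives $\R_s(\p) = \bigoplus_{n\ge 0}\p^{(n)}t^n = R[\p t, f_2 t^2] = R[\p t, \J_2 t^2]$, which is (\ref{cm-rees-zero}).

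For (\ref{cm-rees-one}) and (\ref{cm-rees-two}) I would invoke the standard transfer between a Noetherian (symbolic) Rees algebra and its associated graded ring. Since $R$ is Cohen--Macaulay and $G_s(\p)$ is Cohen--Macaulay of dimension $\dim R = 3$ by Theorem~\ref{cm-ass-gr}(\ref{cm-ass-gr-one}), the Trung--Ikeda/Goto--Shimoda criterion (see \cite{goto}, \cite{goto-nis-shim-2}) asserts that $\R_s(\p)$ is Cohen--Macaulay precisely when the $a$-invariant $a(G_s(\p)) < 0$; and, given that $G_s(\p)$ is moreover Gorenstein by Theorem~\ref{cm-ass-gr}(\ref{cm-ass-gr-two}), Ikeda's criterion asserts that $\R_s(\p)$ is Gorenstein precisely when $a(G_s(\p)) = -2$. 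Thus both (\ref{cm-rees-one}) and (\ref{cm-rees-two}) reduce to the single computation $a(G_s(\p)) = -2$.

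To compute the $a$-invariant I would reduce modulo the degree-zero regular element $x_1^{\star}$. As shown in the proof of Theorem~\ref{cm-ass-gr}(\ref{cm-ass-gr-one}), $x_1^\star$ is a nonzerodivisor on $G_s(\p)$, and using $\p^{(n)}\Tprime = I_n$ (Theorem~\ref{symbolic power}) together with Lemma~\ref{comparing lengths} one identifies the degree-$n$ piece $\p^{(n)}/(\p^{(n+1)}+x_1\p^{(n)})$ of $G_s(\p)/x_1^\star$ with $I_n/I_{n+1}$ over $\Tprime$. Hence $G_s(\p)/x_1^\star G_s(\p) \cong G(\F)$, the associated graded ring of the filtration $\F = \{I_n\}$ in $\Tprime$, which is Cohen--Macaulay of dimension $2$ by Theorem~\ref{cohen macaulayness of G}. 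By Proposition~\ref{main theorem}, $\ell(\Tprime/I_n) = (2q+1)\binom{n+1}{2}$, so $\ell(I_n/I_{n+1}) = (2q+1)(n+1)$ and the Hilbert series of $G(\F)$ is
\[
 \sum_{n\ge 0}(2q+1)(n+1)\,t^n = \frac{2q+1}{(1-t)^2}.
\]
Since $G(\F)$ is Cohen--Macaulay of dimension $2$, its $a$-invariant is the degree of this rational function, namely $0-2 = -2$. Because $x_1^\star$ has degree $0$, the standard formula $a(A/\theta A) = a(A)+\deg\theta$ for a homogeneous nonzerodivisor $\theta$ gives $a(G_s(\p)) = a(G(\F)) = -2$, which by the preceding paragraph yields both (\ref{cm-rees-one}) and (\ref{cm-rees-two}).

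The routine parts are (\ref{cm-rees-zero}) and the Hilbert-series bookkeeping. The main obstacle is the correct invocation of the Rees/associated-graded correspondence in the symbolic (non-standard-graded, merely filtered) setting and, in particular, the careful treatment of the degree-zero element $x_1^\star$: one must justify that passing to $G_s(\p)/x_1^\star$ preserves the $a$-invariant, and that the criteria, stated classically for ordinary $\m$-primary Rees algebras, apply to the Noetherian symbolic filtration $\{\p^{(n)}\}$ here (for which the Gorensteinness of $G(\p R_\p)$ used in Theorem~\ref{cm-ass-gr}(\ref{cm-ass-gr-two}) and the Noetherianness from Lemma~\ref{noetherian} are the enabling inputs).
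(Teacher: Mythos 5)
Your proposal reaches the correct conclusions, and part (\ref{cm-rees-zero}) is handled exactly as in the paper (both deduce $\R_s(\p)=R[\p t,f_2t^2]$ by comparing graded pieces via Theorem~\ref{symbolic power}). For parts (\ref{cm-rees-one}) and (\ref{cm-rees-two}), however, you take a genuinely different route. The paper proves Cohen--Macaulayness \emph{without} any $a$-invariant computation: it quotes Theorem~\ref{bound on length} (Cohen--Macaulayness of $R/(\p^{(n)}+(\ff_2))$ for all $n$) and applies \cite[Theorem~6.7]{goto}; you instead use the Trung--Ikeda-type criterion ($G_s(\p)$ Cohen--Macaulay plus $a(G_s(\p))<0$). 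For Gorensteinness the skeletons agree --- Gorenstein $G_s(\p)$ together with $a(G_s(\p))=-2$ and \cite[Theorem~6.6]{goto} --- but the paper gets $a(G_s(\p))=-2$ by citing \cite[Lemma~6.1]{goto}, while you compute it by hand from the Hilbert series $(2q+1)/(1-t)^2$ of $G(\F)$. Your route buys a unified treatment of both parts through one explicit numerical invariant, and it makes visible \emph{why} the $a$-invariant is $-2$; the paper's route avoids all handling of the non-standard grading by outsourcing it to Goto's memoir. Note also that the criteria you call ``classical'' must be taken in their filtration (not $I$-adic) versions; these are precisely \cite[Theorems~6.6 and 6.7]{goto}, so your appeal is legitimate but should be pinned to those statements.

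There is one step you flag but do not close, and it is the only real gap: the claim that killing the \emph{degree-zero} nonzerodivisor $x_1^{\star}$ preserves the $a$-invariant. The formula $a(A/\theta A)=a(A)+\deg\theta$ is proved in the literature for homogeneous nonzerodivisors of \emph{positive} degree; for $\deg\theta=0$ the exact sequence $0\to G_s(\p)\xrightarrow{\;x_1^{\star}\;} G_s(\p)\to G_s(\p)/x_1^{\star}G_s(\p)\to 0$ and the vanishing $H^2_{\M}(G_s(\p))=0$ only give, directly, that $H^{2}_{\M}(G_s(\p)/x_1^{\star}G_s(\p))$ is the kernel of multiplication by $x_1^{\star}$ on $H^{3}_{\M}(G_s(\p))$ in each degree, hence $a(G_s(\p)/x_1^{\star}G_s(\p))\le a(G_s(\p))$. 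The reverse inequality is true but needs an argument: each graded component of $H^{3}_{\M}(G_s(\p))$ is an Artinian module over the degree-zero ring $R/\p$ (the top graded local cohomology is $*$Artinian, and for a non-negatively graded ring a descending chain of $G_0$-submodules of a fixed component lifts to a descending chain of graded submodules), and $x_1^{\star}$ lies in the maximal ideal of $R/\p$, so multiplication by $x_1^{\star}$ annihilates the nonzero socle of the top nonvanishing component and therefore has nonzero kernel there; that kernel is a component of $H^{2}_{\M}$ of the quotient, giving $a(G_s(\p)/x_1^{\star}G_s(\p))\ge a(G_s(\p))$. With this socle argument (or by simply citing \cite[Lemma~6.1]{goto}, as the paper does) your proof is complete.
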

\begin{proof}
(\ref{cm-rees-zero}) The proof  follows from  Theorem~\ref{symbolic power}.

(\ref{cm-rees-one})
By Theorem~\ref{bound on length}, ${\displaystyle \f{R}{\p^{(n)}+(\ff_{2})}}$ is Cohen-Macaulay for all $n \geq 1$. Hence,
by \cite[Theorem~6.7]{goto}, $\R_s(\p)$ is Cohen-Macaulay.

(\ref{cm-rees-two})
By \cite[Lemma~6.1]{goto}, the a-invariant of $(G_{s}(\p))$,  $a(G_{s}(\p))= -(2)$. By \cite[Theorem~6.6]{goto}, 
 and Theorem~\ref{cm-ass-gr}, $\R_s(\p)$ is Gorenstein.
\end{proof}

\section{A few questions}
\label{questions}
In this section we state a few related  questions which  are of  interest. 

\begin{question}
In \cite{goto-mor} S.~Goto and M. Morimoto studied the monomial curves $(t^{n^2+2n+2}, t^{n^2+2n+1}, t^{n^2+n+1})$. They showed that for $n=p^r$  ($r 
\geq 1$),  $R_s(\p)$ is Noetherian but not Cohen-Macaulay if $\charac~\kk =p$. In \cite{goto-nis-wat}, the authors  raised the 
following question. If $ \charac~\kk = p^{\prime} \not = p$, is   $R_s(\p)$ Noetherian? In \cite{vasconcelos}, W.~Vasconcleos 
showed that if $\charac~\kk =0$ and $n=2$, then $R_s(\p)$ Noetherian. If $p$ is a prime and $n=p>2$, then is $R_s(\p)$ Noetherian?
\end{question}

\begin{question}
Let $\kk$ be a field and let  $\p$ be the prime ideal defining the monomial curve $(t^a, t^b, c^c)$ in $\A^3$ where $a$, $b$ and $c$ are pairwise coprime. In \cite{cutkosky}, D. Cutkosky gave a geometric meaning to symbolic primes.  
 He found some interesting examples of monomial curves for which  $R_s(\p)$ is finitely generated. Using the criteria in D.~Cutkosky's paper,  H.~Srinivasan showed that 
if $a =6$, then  $R_s(\p)$ is finitely generated  \cite{srinivasan}. In \cite{huneke},   C. Huneke showed that $R_s(\p)$ is finitely generated if $a=4$. 
 Can we find all possible $(a,b,c)$ such that $R_s(\p)$ is finitely generated?

\end{question}


\begin{thebibliography} {BB}

 \bibitem
 {bresinsky}
 H.~Bresinsky, {\em Monomial space curves in $\A^3$ as set-theoretic complete intersections.} Proc. Amer. Math. Soc. {\bf 75} (1979), no. 1, 23-24.
 
  \bibitem
 {bresinsky2}
 H.~Bresinsky, {\em Monomial Gorenstein curves in A4 as set-theoretic complete intersections}. Manuscripta Math. {\bf 27} (1979), no. 4, 353?358.
 
  \bibitem
{cowsik-nori} 
R.~C.~Cowsik, M.~V.~Nori, {\em Affine curves in characteristic $\p$ are set theoretic complete intersections}. Invent. Math. {\bf 45} (1978), no. 2, 111-114.


 \bibitem
 {cowsik}
  R.~C.~Cowsik, {\em Symbolic powers and number of defining equations}. Algebra and its applications (New Delhi, 1981), 13-14, Lecture Notes in Pure and Appl. Math., {\bf 91}, Dekker, New York, 1984. 
  
  \bibitem
{cutkosky}
D.~Cutkosky, {\em Symbolic algebras of monomial primes}, Journal für die reine und angewandte Mathematik  {\bf 416} (1991), 71-90.

\bibitem
{eliahou}
S.~Eliahou, {\em Courbes monomiales et algebre de Rees Symbolique}, Th$e\grave{e}$se, Universit$\grave{e}$ de Gen$\grave{e}$ve, 1983. 

  \bibitem
{clare-shreedevi} C.~D'Cruz and  S.~Masuti,{\em Cohen-Macaulayness and Gorensteinness of symbolic blowup algebras of  certain monomial curves.} \arxiv{1610.03658}

 \bibitem{ene-herzog} V. Ene and J. Herzog, {\em Gr\"{o}bner bases in commutative algebra.} Graduate Studies in Mathematics, {\bf 130} American Mathematical Society, (2012). 
 
\bibitem
{goto} 
 S.~Goto,  {\em The Cohen-Macaulay symbolic Rees algebras for curve singularities.} The Cohen-Macaulay and Gorenstein Rees algebras associated to filtrations. Mem. Amer. Math. Soc. {\bf 110} (1994), no. 
 526, 1-68. 
 
 \bibitem
{goto-nis-shim}
S.~Goto, Shiro; K. Nishida, and Y.~Shimoda,
{\em Topics on symbolic Rees algebras for space monomial curves.} Nagoya Math. J. {\bf 124} (1991), 99-132.

    
\bibitem
{goto-nis-shim-2}
S.~Goto, Shiro; K. Nishida,  Y.~Shimoda,
{\em The Gorensteinness of symbolic Rees algebras for space curves.}
J. Math. Soc. Japan {\bf 43} (1991), no. 3, 465-481.

\bibitem
{goto-nis-wat}
S.~Goto, Shiro; K. Nishida, K.~ Watanabe, {\em Non-Cohen-Macaulay symbolic blow-ups for space monomial curves and counterexamples to Cowsik's question.} Proc. Amer. Math. Soc. {\bf 120} (1994), no. 2, 383-392. 

\bibitem
{herzog}
J.~ Herzog, {\em Generators and relations of abelian semigroups and semigroup rings.} Manuscripta Math. {\bf 3}
 (1970) 175-193.
 
 \bibitem
{herzog-ulrich}
J.~Herzog and B.~Ulrich,
{\em Self-linked curve singularities}. 
Nagoya Math. J. {\bf 120} (1990), 129-153.

\bibitem
{huneke0} C.~Huneke, {\em  The Primary components of and Integral Closures of Ideals in $3$-Dimensional Regular Local Rings.} Mathematische Annalen  {\bf 275}| (1986)
 617-636

\bibitem
{huneke} C.~Huneke,
{\em Hilbert functions and symbolic powers.}
Michigan Math. J. {\bf 34} (1987),   293-318. 

 
\bibitem
{schenzel}
G.~Kn$\ddot{o}$del, P.~Schenzel and  R.~Zonsarow,
{\em Explicit computations on symbolic powers of monomial curves in affine space},  Comm. Algebra {\bf 20} (1992), no. 7, 2113-2126. 

\bibitem
{kronecker}
L.~Kronecker,
{\em Grundz$\ddot{u}$ge einer arithmetischen Theorie der algebraische Gr$\ddot{o}$ssen}. 
J. Reine Angew. Math. {\bf 92} (1882), 1-122. 

\bibitem
{lyubeznik} G.~Lyubeznik,
{\em A survey of problems and results on the number of defining equations. Commutative algebra} (Berkeley, CA, 1987), 375-390, 
Math. Sci. Res. Inst. Publ., {\bf 15}, Springer, New York, 1989. 

 \bibitem
 {matsumura} H. Matsumura, {\em Commutative ring theory}, translated from the Japanese by M. Reid,  Second edition, Cambridge Studies in Advanced Math., {\bf 8} Cambridge University Press, (1989). 
 
 
\bibitem
{goto-mor}
M.~Morimoto and S. Goto,
{\em Non-Cohen-Macaulay Symbolic Blow-ups for Space Monomial Curves.}
Proceedings of the American Mathematical Society, {\bf 116}, No. 2 (1992), pp. 305-311


\bibitem {morales} M.~Morales, {\em  Noetherian symbolic  blow-ups.}
J. Algebra {\bf 140} (1991), 12-25. 

\bibitem {patil} D.~Patil, {\em Certain monomial curves are set-theoretic complete intersections.}
 Manuscripta Math.  {\bf 68} (1990),  399-404.


\bibitem
{schenzel2}
P.~Schenzel: {\em Examples of Noetherian symbolic blow-up rings. Rev. Roumaine Math. Pures Appl.}  {\bf 33} (1988), no. 4, 375-383. 

\bibitem
{scolan}
$\check{S}$. Sol$\check{c}$an
On the computation of symbolic powers of some curves in $\A^4$. 
Acta Math. Univ. Comenian. (N.S.) 69 (2000), no. 1, $85-95$. 

\bibitem
{srinivasan}
H.~Srinivasan, {\em On finite generation of symbolic algebras of monomial primes}. Communications in Algebra 
{\bf 19} (1991), 2557-2564. 


\bibitem {valla} G.~Valla, {\em On determinantal ideals which are set-theoretic complete intersections.} Compositio Mathematica {\bf 42} (1980),  3-11. 

\bibitem {vasconcelos}
W.~V.~Vasconcelos
{\em Symmetric algebras and factoriality}. Math. Sci. Res. Inst. Publ., {\bf vol. 15}, Springer, New York, 1989, pp. 467-496.

\bibitem
{vasconcelos2}
W.~V.~Vasconcelos, 
{\em Arithmetic of Blowup Algebras}, London Mathematical Society Lecture Note Series
Cambridge University Press, Cambridge (1994)

 \end{thebibliography}
\end{document}